\documentclass{aims}
\usepackage[english]{babel}
\usepackage{amssymb}                 
\usepackage{amsthm}                         
\usepackage{color}      
\usepackage{amsmath}
  \usepackage{paralist}
  \usepackage{graphics} 
  \usepackage{epsfig} 
\usepackage{graphicx}  \usepackage{epstopdf}
 \usepackage[colorlinks=true]{hyperref}
\hypersetup{urlcolor=blue, citecolor=red}

  \textheight=8.2 true in
   \textwidth=5.0 true in
    \topmargin 30pt
     \setcounter{page}{1}



\newcommand{\haz}{\widehat}

\newcommand{\QQQ}{\haz{\mathcal{Q}}}
\newcommand{\R}{{\mathbb R}}

\newcommand{\Stbl}{\mathcal{S}}

\newcommand{\msat}{m_{\textrm{sat}}}

\newcommand{\md}{{\rm d}}
\renewcommand{\O}{\Omega}

\def\Gliminf{\mathop{\Gamma\text{--}\mathrm{liminf}}}

\newcommand{\epsi}{\varepsilon}

\newcommand{\EEE}{\color{black}}



\newcommand{\UU}{\mathcal U}
\newcommand{\MM}{\mathcal M}
\newcommand{\KK}{\mathcal K}
\newcommand{\QQ}{\mathcal Q}


\newcommand{\ds}{{\rm d}s}



\newtheorem{theorem}{Theorem}[section]

\newtheorem{lemma}[theorem]{Lemma}
\newtheorem{proposition}{Proposition}

\theoremstyle{definition}
\newtheorem{definition}[theorem]{Definition}

\title[Quasistatic magnetoelastic thin film]
      { Quasistatic  evolution of magnetoelastic \\ thin films via dimension
reduction}

\author[Martin Kru\v{z}\'{i}k, Ulisse Stefanelli, and Chara Zanini]{}

\subjclass{Primary: 74F15, 74N30, 35K55.}
 \keywords{magnetoelasticity,  energetic solution,
existence,  dimension reduction,  $\Gamma$-convergence for rate-independent processes.}

 \email{kruzik@utia.cas.cz}
 \email{ulisse.stefanelli@univie.ac.at}
 \email{chiara.zanini@polito.it}

\thanks{Martin Kru\v{z}\'{i}k is partially supported by projects  P201/10/0357 and 14-15264S (GA\v{C}R)}

\begin{document}
\maketitle

\centerline{\scshape Martin Kru\v{z}\'{i}k}
\medskip
{\footnotesize
 \centerline{Institute of Information Theory
    and Automation of the ASCR}
   \centerline{Pod Vod\'{a}renskou v\v{e}\v{z}\'{i}
    4, 182 08 Prague, Czech Republic}
\centerline{and}
   \centerline{Faculty of Civil Engineering,
    Czech Technical University}
\centerline{Th\'{a}kurova 7, 166 29 Praha 6, Czech
    Republic} 
} 
\bigskip

\centerline{\scshape Ulisse Stefanelli}
\medskip
{\footnotesize
 \centerline{Faculty of Mathematics, University of Vienna}
   \centerline{Oskar-Morgenster-Platz 1, 1090 Vienna, Austria}
\centerline{and}
 \centerline{Istituto di Matematica Applicata e Tecnologie
   Informatiche {\it E. Magenes}, CNR}
   \centerline{via Ferrata 1, 27100 Pavia, Italy}
} 
\bigskip

\centerline{\scshape Chiara Zanini}
\medskip
{\footnotesize
 \centerline{Dipartimento di Scienze Matematiche
{\it G. L. Lagrange}, Politecnico di Torino}
\centerline{Corso Duca degli Abruzzi 24, 10129 Torino, Italy} 
} 

\bigskip

 \centerline{(Communicated by the associate editor name)}

\begin{abstract}
 A rate-independent model for the  quasistatic  evolution of
a magnetoelastic  thin film is advanced  and
analyzed. Starting from the  three-dimen\-sional setting, we present
an evolutionary
  $\Gamma$-convergence argument  in order to  pass to
the limit  in  one of the material dimensions. By  taking
into account both  
conservative and dissipative actions, a nonlinear evolution system of rate-independent
type  is  obtained. The existence of so-called {\it energetic
  solutions} to  such  system  is proved via
approximation. 
\end{abstract}

\section{Introduction}
Magnetoelasticity (or magnetostriction)  is the property of certain solids exhibiting
a strong coupling between mechanical and magnetic variables. As effect
of this coupling, relevant reversible mechanical deformations can
be induced by the application of an external magnetic field. This
behavior is clearly of a great applicative interest in connection
with sensors and actuators design, as well as for a variety of innovative functional-material devices. 

 The origin of magnetoelasticity lies in the interplay between
material crystallographic patterning (where different crystals present
different easy axis of magnetization) and magnetic domains. In absence
of external magnetic fields, magnetic domains orient in such a way to
minimize long-range dipolar effects. This generically results in some
small or even negligible magnetization of the medium. Upon applying an  external 
magnetic field the magnetic
domains tend to reorient toward it. As  magnetizations  are
related to specific stress-free reference strains, this causes indeed
the emergence of a macroscopic deformation. As the intensity of the 
magnetic field is increased, more and more 
magnetic domains orientate themselves so that 
their principal axes of anisotropy are collinear 
with the magnetic field in each region and finally 
saturation is reached. We refer to e.g.~\cite{brown} for  a
discussion of the  physical
foundations of  magnetoelasticity.  

The mathematical modeling of magnetoelasticity is a vibrant
area of research, in particular, in connection with intelligent
materials  such as iron/rare-earth giant magnetostrictive materials \cite{DKV,james-kinderlehrer,K9} and
magnetic shape-memory alloys \cite{bks,bs,JS}. Correspondingly, the
understanding of the statics of these materials has attracted
considerable attention
\cite{DD98,desimone-james,rybka-luskin}. Building  from one
side   upon the static thin-film-limit analysis for
magnetic materials in {\sc Gioia \& James} \cite{goia-james} and 
from the other side   on the dimension reduction for static linear elastic plates
in \cite{freddi-paroni-zanini}, we shall be here concerned with the evolutive situation instead. In
particular,   we are interested in a slow  quasistatic  evolution
of such materials  under the  combined  action of conservative and
dissipative forces. We indeed   assume that  the  change of
magnetization  implies 
dissipation while  no dissipation is  associated with elastic
variables. The evolution is driven by a Dirichlet boundary condition
and/or by an external magnetic field.  Changes of external
conditions are considered to be  slow enough so that  inertial effects can be
neglected and  the system is always in 
equilibrium  so that its evolution is  quasistatic.   

 The focus of the paper is on deriving a  quasistatic  evolution
theory for magnetoelastic thin films. After considering the
above-mentioned dissipative evolution problem for the bulk,
three-dimensional material, we address the thin-film evolution by
means of a dimension reduction argument. In particular, we assume that
the reference configuration of the body is thin in one dimension and
we pass to the limit with respect to it. Our 
specific choice for the  scaling  entails that  in  the limit
one obtains a  Kirchhoff-Love   quasistatic  evolution  plate model
 for  magnetoelastic materials. Moreover, we are able to deal
 with thickness-depedent change of the anisotropic magnetic behavior
 of the sample, both at the static and the evolutive level. 

 The novelty of the paper is indeed twofold. From the one hand, we
advance the first  quasistatic  evolution model for
magnetoelastic thin films and prove the existence of suitable
variational solutions.  In particular, the emergence of the so-called
magnetic anisotropic behavior is emphasized.  Secondly, by deriving such a model
by dimension reduction, we provide a novel evolutive
$\Gamma$-convergence result in the magnetoelastic context.
This consists in combining some slightly refined
version of the already available static thin-film-limit theory within the general
frame of the evolutive
$\Gamma$-convergence analysis for rate-independent systems from
\cite{mrs}. We shall observe that, as already in \cite{goia-james},
 the magnetostatic energy contribution which is  usually 
difficult to evaluate in micromagnetics reduces in our model to
calculating the square of the third component of the
magnetization. This makes  the   model attractive from the
point of view of numerics. 

 Apart from the magnetoelastic setting,  
dimension reduction via $\Gamma$-convergence in the  quasistatic  evolutive setting has already
attracted some attention.
{\sc Liero \&
  Mielke} derive in \cite{ML11,M12} an elastoplastic plate theory in
presence of linear kinematic hardening. A
different theory is then obtained by an alternative scaling choice by
{\sc Liero \& Roche} \cite{LR12}. The perfectly-plastic case has then
be considered by {\sc Davoli \& Mora} \cite{DM13} and {\sc
  Davoli} \cite{D13,D14}, also in the frame of finite
plasticity. 
 {\sc Babadjian} obtained in \cite{Ba06} via dimension reduction the existence of a  quasistatic  evolution for a free crack in an elastic brittle thin film.
Dimension reduction in a delamination context is addressed
in \cite{FPRZ,FRZ13} whereas an application to shape-memory thin films
is described in \cite{BKP}. 

The plan of the paper is as follows.  We start by describing the bulk
model in the static three-dimensional situation in Section
\ref{description}. Then, the
corresponding static thin-film micromagnetic and magnetoelastic limits are
discussed in Section~\ref{dimension-reduction}. Eventually, Section
\ref{evo} focuses on  quasistatic  evolution situations both in the
bulk (Subsection~\ref{analysis}) and in the thin-limit case
(Subsection \ref{KL}, respectively). In particular, Subsection
\ref{KL} contains our main  convergence result,  i.e.  Theorem~\ref{thm}.

\section{ Description of the static bulk model}\label{description}
\setcounter{equation}{0}

 Let us start by specifying the modelization in the
three-dimensional setting. The thin-film model will then be derived in
Section \ref{evo} by means of a rigorous dimension reduction procedure. We assume to have fixed 
 an orthonormal  basis $\{e_1,e_2,e_3\}$  of $\R^3$ and  to be given  a thin
 magnetic   body with reference configuration 
 $\Omega_h:=\{(x_1,x_2)\in S;\ 0<x_3<h\}$.  Here, $S\subset\R^2$ 
 is  a bounded Lipschitz domain in the   $\{e_1,
 e_2\}$-plane  and $h >0 $ represents the small thickness of the
  specimen, eventually bound to go to $0$.  

 \subsection{Micromagnetics}
The magnetization of the body is described by  $\bar
 m:\Omega_h\rightarrow \R^3$ subject to the  saturation  constraint
$$|\bar m(x)|=\msat  \ \text{ for a.e. } x\in\Omega_h,$$
where the {\it saturation magnetization} $\msat>0$ is assumed to be
constant.  The  micromagnetic energy of the film  is
classically defined as \cite{brown,desimone-james,kruzik-prohl} 
\begin{align}
\bar E^{\rm mag}_h(t,\bar m)&:=\frac{1}{|S|h} \int_{\O_h}\left(\alpha
  |\nabla \bar m(x)|^2{+}\bar \varphi_h  (x,  \bar m(x)){+}\frac12\bar m(x)\cdot\nabla
   \bar \xi  \right) \md x \nonumber\\
& - \int_{\O_h} \bar H(x)\cdot \bar m(x) \md x. \label{micromagenergy}
\end{align}
 The {\it stray field} $\nabla \bar \xi$ is related to 
the magnetization  $\bar m$ via the  Maxwell  equation
\begin{equation}
\nabla\cdot(-\mu_0\nabla \bar \xi+\bar m\chi_{ \O_h})=0 \ \mbox{ in }\R^3\nonumber
\end{equation}
 where  $\mu_0$ is the vacuum permeability  and
$\chi_{ \O_h}$ is the {\it characteristic function} of the domain
$\O_h$,  namely $\chi_{ \O_h}=1$ on $\O_h$ and $\chi_{ \O_h}=0$
elsewhere in $\R^3$. 

The first term in the integral in (\ref{micromagenergy}) is the {\it exchange
energy}, penalizing indeed spatial changes of the
magnetization. 

The (thickness-dependent) {\it magnetic potential} 
$\bar \varphi_h  : \Omega_h \times \msat S^2 \to [0,\infty)$ describes the
magnetic anisotropy of the material. In particular, for all
thicknesses $h>0$ and $x \in \Omega_h$  it is an even function 
vanishing precisely at the set 
$\{\pm s_i;\,|s_i|=\msat\}_{i=1}^N$  for some $s_i=s_i(x)$,  where
$N=1$ for uniaxial magnets and $N=3$ or $N=4$ for cubic magnets. 
The lines through $\pm s_i$ are called {\it easy axes} of
the magnet. The space dependence in $\bar \varphi_h$ is intended
to model the polycrystalline texture of the medium and we assume $\bar
\varphi_h$ to be continuous. This particularly entails that the anisotropic energy
term is lower semicontinuous with respect to the $L^2$ topology.

 Following the classical theory by N\'eel
\cite{Nell54}, we allow the magnetic anisotropy of the medium to depend
on the sample thickness. It is indeed observed that many material
systems develop a very strong magnetic anisotropy in the off-plane
direction as $h \to 0$, see \cite{Bruno88,Schulz94}, for instance. This effect
is at the basis of the so-called {\it perpendicular recording}
technology, see the review \cite{Richter07}. \EEE 

The term containing $\nabla \xi$ is the so-called {\it
  stray-field energy} and represents long-range
dipolar self-interactions  favoring indeed the formation of a 
solenoidal  magnetic field. In particular, $\xi$ is the {\it
  magnetostatic potential}. Eventually, the last term in  the
right-hand side of 
\eqref{micromagenergy} is the {\it Zeeman energy}, namely
 the work done by  the external magnetic field $\bar H \in
L^1(\O_h;\R^3)$.  We anticipate that in Section \ref{evo} the
external field will  
depend on time and drive the  quasistatic  evolution of the film.

 An application of  the Direct Method of the Calculus of
Variations,  see e.g.~\cite{james-kinderlehrer},  ensures 
that for every $h>0$,  the micromagnetic energy $\bar E^{\rm
  mag}_h$ admits a minimizer  in   the set 
$$\MM_h:=\{\bar m\in W^{1,2}(\O_h;\R^3) \ : \   |\bar m | = \msat\mbox{ a.e.}\}.$$ 

 \subsection{Magnetomechanics} 

 The medium will be subject to nonhomogeneous time-dependent Dirichlet boundary
conditions on some distinguished part $\Gamma_h:= \omega \times (0,h)$ of the boundary
$\partial S \times (0,h)$ where $\omega \subset \partial S $ is of positive surface measure. In order to
 prescribe  these conditions we assume to be given $ \bar u^{\rm Dir}_h\in
W^{1,2}(\O_h;\R^3)$ and let $$\bar u + \bar u^{\rm
  Dir}_h:\O_h\to\R^3$$ be the displacement of the specimen from its
reference configuration.  We classically denote by
$\varepsilon(\bar u)$ the symmetrized gradient
$\varepsilon(\bar u):=(\nabla \bar u+\nabla^\top \bar u)/2$.  
Within the small deformation realm,  we linearly decompose the strain of the
material as
$$ \varepsilon(\bar u {+} \bar u^{\rm
  Dir}_h) = \varepsilon^{\rm elas} + \varepsilon^{\rm mag}(\bar
m).$$
Here, $\varepsilon^{\rm elas} $ is the elastic part of the strain. In
particular, $\varepsilon^{\rm elas} = \mathbb{C}^{-1}\sigma$, where  $\mathbb{C}$ is the elasticity tensor (symmetric,
positive definite) and $\sigma$ is the {\it stress} experienced by the
material. On the other hand, 
$\varepsilon^{\rm mag}(\bar
m)$ is the stress-free strain corresponding to the magnetization $\bar
m$. In particular, we could choose 
$$\varepsilon^{\rm mag}(\bar m):=\bar m\otimes \bar m-\frac{\msat^2}{3}\mathbb{I},$$
where $\mathbb{I}$ is the identity matrix in $\R^{3\times 3}$.  
Note 
that $\varepsilon^{\rm mag}$ is a symmetric, continuous, even, and 
deviatoric (as $|\bar m|=\msat$)   tensor-valued mapping of $\bar m$.  The specific
form of $\varepsilon^{\rm mag}$ is here chosen for definiteness only.  In
fact, other forms of $\varepsilon^{\rm mag}$ can also be covered by our model
as long as they  enjoy  the mentioned properties. 

The {\it elastic energy} of the
medium is classically described  by the quadratic  form  
\begin{align}
\bar E^{\rm elas}_h(t,\bar u,\bar
m)& :=\frac{1}{2|S|h}\int_{\O_h}\mathbb{C} \varepsilon^{\rm elas}{:}
\varepsilon^{\rm elas} \, \md x\nonumber\\
& =\frac{1}{2|S|h}\int_{\O_h}\mathbb{C}\big(\varepsilon(\bar
u{+}\bar u^{\rm Dir}_h)-\varepsilon^{\rm mag}(\bar m) \big){:}\big (\varepsilon(\bar u{+}\bar u^{\rm Dir}_h)-\varepsilon^{\rm mag}(\bar m) \big)\,\md x.\nonumber
\end{align}
 Given the magnetization $\bar m$, the elastic equilibrium problem
consists in finding $\bar u$ minimizing the elastic energy 
$\bar E^{\rm elas}_h$ 
on the set of admissible displacements
\begin{align}
 \UU_h:=\{u\in W^{1,2}(\O_h;\R^3) \ : \ u=0 \mbox{ on }\Gamma_h\}.\nonumber
\end{align}    
 This problem has clearly a unique solution which depends linearly
on both $\epsi(\bar u^{\rm Dir}_h)$ and $\varepsilon^{\rm mag}(\bar m)$. Note that, in
particular, $\bar u+\bar u^{\rm Dir}_h= \bar
u^{\rm Dir}_h$ on $\Gamma_h$. 
 
The total magnetoelastic energy of  the specimen results from the
sum of the micromagnetic and the elastic energy and  reads
\begin{align}
\bar E_h(\bar u,\bar m):=\bar E^{\rm mag}_h(\bar m) +
\bar E^{\rm elas}_h(\bar u,\bar m). 
\nonumber
\end{align}
 It is a rather standard matter to check that the total energy
admits minimizers $(\bar u ,\bar m)$ in the set $\UU_h \times
\MM_h$. These correspond to a variational solution of the
magnetoelastic system
\begin{subequations}\label{sys}
  \begin{align}
    \nabla {\cdot} \sigma &=0 \quad \text{in} \  \Omega_h,\\
    \mathbb{C}(\epsi(\bar u {+} \bar u^{\rm Dir}_h)-\varepsilon^{\rm
      mag}(\bar m)) &=\sigma \quad \text{in} \  \Omega_h,\\
    - \alpha \Delta \bar m + \nabla_{\bar m} \bar \varphi_h(x,
    \bar m) +\frac12 \nabla
    \bar
    \xi &=\bar H \quad \text{in} \  \Omega_h,\label{m}\\
    \nabla\cdot(-\mu_0\nabla \bar \xi+\bar m\chi_{ \O_h})&=0\quad
    \text{in} \  \R^3,\\
     \alpha \partial_{\nu} \bar m &=0 \quad \text{on} \  \partial \Omega_h,\\
    \bar u&=0 \quad \text{on} \ \Gamma_h
  \end{align}
\end{subequations}
where we have denoted by $\nu$ the outer unit normal to $\partial \Omega_h$.


\section{ Static thin-film limits }\label{dimension-reduction}

 We shall preliminarily record here some dimension reduction analysis in the static situation of Section \ref{description}. Our aim is to investigate the limit $h \to 0$,
corresponding indeed to the situation of a very thin structure in the
$e_3$ direction. The aim of the section is to present some
corresponding $\Gamma$-convergence analysis. We discuss the micromagnetic and the magnetoelastic
limit separately.

\subsection{ Micromagnetic limit}
We  shall prove  the convergence of minimizers  
 of $\bar E^{\rm mag}_h$ 
to minimizers of some limiting
energy $E^{\rm mag}_0$ as $h\to 0$. Our argument corresponds to an
extension of the analysis by {\sc Gioia \& James}
\cite{goia-james}, who investigated  the case of a thickness- and space-independent
 magnetic potential $\bar \varphi$ in absence of external field,
 i.e.  $\bar H=0$. We shall set  the result 
 within  the  classical 
$\Gamma$-convergence frame \cite{braides,dalmaso}. Considering  the 
standard rescaling  with $Z_h:={\rm diag}(1,1,1/h)$ and the mapping
$x\mapsto Z_hx$, we associate to $\bar m:\O_h\to\R^3$ a magnetization
$m:\O:=\O_1\to\R^3$, to $\bar \xi:\R^3\to \R$ the
rescaled magnetostatic potential   $\xi:\R^3\to \R$, 
to $\bar H:\O_h\to\R^3$ the external field $ H:\O\to\R^3$, 
and to $\bar \varphi_h: \Omega_h\times \msat S^2\to [0,\infty) $ the
rescaled magnetic potential $\varphi_h: \Omega\times \msat S^2 \to [0,\infty) $
defined as  
\begin{align*}
&m(Z_hx):=\bar m(x), \ \ H(Z_hx):=\bar H(x), \ \ \varphi_h(Z_hx,m):=
\bar \varphi_h(x,m),  \\ 
&\qquad \mbox{ and} \ \ \xi(Z_hy):=\bar \xi(y)\  \ \forall
x\in\O_h, \  y\in\R^3, \ m \in \msat S^2.
\end{align*}\EEE
 Correspondingly, we define the set $\MM : = \MM_1$.   By using the summation convention we can express  
$$
\nabla\bar m(x)=m,_{i}(Z_hx)\otimes e_i+ \frac1h m,_{3}(Z_hx)\otimes e_{3}\ \text{ where }i=1,2.$$
Moreover, we define  the {\it planar}  components of the magnetization and of the gradients as 
\begin{align*}
&m_p:=m_ie_i=(m_1,m_2), \  \ \nabla_p m:=m,_i\otimes e_i=(m,_1,m,_2), \\
&\nabla_p \xi:= \xi,_ie_i=(\xi,_1,\xi,_2).
\end{align*}  

 By exploiting   this rescaling and notation, we can 
equivalently  write the energy $\bar E^{\rm mag}_h(\bar m)$  in terms of $m$ as  $\bar E^{\rm mag}_h(\bar m)=E^{\rm mag}_h(m)$ where  
\begin{align*}
&E^{\rm mag}_h(m):= \frac{\alpha}{|S|}  \int_\O\left(|\nabla_p m(z)|^2 +
  \frac{1}{h^2}|m,_{3}(z)|^2\right)\, \md z \\
&+ \frac{1}{|S|} \int_\O\left(\varphi_h(z, m(z))-H(z)\cdot m(z) +\frac{1}{2}\left(\nabla_p \xi(z)\cdot m_p(z)+\frac1h \xi,_3(z)m_3(z)\right)\right)\,\md z\        
\end{align*}
where the relationship between the magnetization and the stray field
is given by the  Maxwell  equation in the whole space
\begin{align}\label{stray-scaled}
\nabla_p\cdot(-\nabla_p \xi + m_p\chi_\O)+\frac{1}{h}\frac{\partial}{\partial z_3}\left(-\frac1h \xi,_3+m_3\chi_\O\right)=0 .
\end{align}
Moreover, $m$  will be required to satisfy the saturation
constraint  $|m|=\msat$ a.e.~in $\O$. 
The following result can be found in \cite[Prop.~4.1]{goia-james}.

\begin{lemma}\label{gioia}
Let $\haz m_h\chi_\O\to \widetilde m\chi_\O$ in $L^2(\R^3;\R^3)$ as $h\to
0$, let $|\haz m_h|=\msat$ a.e.~in $\O$ and let $\haz\xi_h$ be the
solution to \eqref{stray-scaled} corresponding to $\widetilde m_h\chi_\O$.
Then, we have that $\|\nabla\haz\xi_h\|_{L^2(\R^3;\R^3)}\to 0$ and $\|h^{-1}\haz\xi_{h,3}-\widetilde m,_3\|_{L^2(\R^3)}\to 0$.
Moreover,  
\begin{equation}
\lim_{h\to 0}  \frac12  \int_\O\left(\nabla_p \haz\xi_h(z)\cdot (\haz
  m_{h})_p(z)+\frac1h \haz\xi_{h,3}(z)\,
   \haz m_{h3}(z) 
  \right)\,\md
z= \frac12 \int_\O(\widetilde m_3(z))^2\,\md z .\label{eq_lemma}
\end{equation}
\end{lemma}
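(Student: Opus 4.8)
The plan is to analyze the rescaled Maxwell equation \eqref{stray-scaled} directly, exploiting the singular $1/h$ prefactor in the $z_3$-direction. First I would test \eqref{stray-scaled} against $\haz\xi_h$ itself over $\R^3$: integration by parts yields
$$\int_{\R^3}|\nabla_p\haz\xi_h|^2 + \frac{1}{h^2}|\haz\xi_{h,3}|^2\,\md z = \int_\O\Big(\nabla_p\haz\xi_h\cdot(\haz m_h)_p + \frac1h\haz\xi_{h,3}\,\haz m_{h3}\Big)\,\md z.$$
Estimating the right-hand side by Cauchy--Schwarz gives $\|\nabla_p\haz\xi_h\|_{L^2}^2 + h^{-2}\|\haz\xi_{h,3}\|_{L^2}^2 \le C\big(\|\nabla_p\haz\xi_h\|_{L^2} + h^{-1}\|\haz\xi_{h,3}\|_{L^2}\big)$, using the uniform $L^2$ bound $|\haz m_h|=\msat$ on the bounded set $\O$. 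Hence $\|\nabla_p\haz\xi_h\|_{L^2}$ and $h^{-1}\|\haz\xi_{h,3}\|_{L^2}$ are bounded uniformly in $h$; in particular $\|\haz\xi_{h,3}\|_{L^2}=O(h)$ and $\|\nabla\haz\xi_h\|_{L^2}=O(1)$.

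Next I would identify the limit of the rescaled field $\zeta_h := h^{-1}\haz\xi_{h,3}$. Multiplying \eqref{stray-scaled} by $h$, we get $h\,\nabla_p\cdot(-\nabla_p\haz\xi_h + (\haz m_h)_p\chi_\O) + \partial_{z_3}(-\zeta_h + \haz m_{h3}\chi_\O)=0$. Since $\nabla_p\haz\xi_h$ and $(\haz m_h)_p$ are bounded in $L^2$, the first term tends to $0$ (as a distribution, after multiplication by $h$), so in the limit $\partial_{z_3}(-\zeta + \widetilde m_3\chi_\O)=0$ in $\R^3$ for any weak limit $\zeta$ of $\zeta_h$; combined with $\zeta\in L^2$ this forces $\zeta = \widetilde m_3\chi_\O$, which is the unique $L^2$ solution. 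To upgrade weak to strong convergence, I would again use the energy identity: $h^{-2}\|\haz\xi_{h,3}\|_{L^2}^2 = \|\zeta_h\|_{L^2}^2$ equals the right-hand side minus $\|\nabla_p\haz\xi_h\|_{L^2}^2$; since $\|\nabla_p\haz\xi_h\|_{L^2}\to 0$ (which follows because, testing instead against a fixed smooth function, the planar gradient is seen to vanish in the limit — or more cleanly, because the left-hand side of the energy identity dominates $\|\nabla_p\haz\xi_h\|_{L^2}^2$ and the right-hand side, after passing to the limit, equals exactly $\int_\O\widetilde m_3^2$, leaving no room for a positive planar contribution), we recover $\limsup_h\|\zeta_h\|_{L^2}^2 \le \int_\O\widetilde m_3^2 = \|\zeta\|_{L^2}^2$, and weak convergence plus convergence of norms gives strong convergence $\zeta_h\to\widetilde m_3\chi_\O$ in $L^2(\R^3)$. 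This simultaneously establishes $\|\nabla\haz\xi_h\|_{L^2}\to 0$ and $\|h^{-1}\haz\xi_{h,3}-\widetilde m_{,3}\|_{L^2}\to 0$.

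Finally, \eqref{eq_lemma} follows by passing to the limit in the right-hand side of the energy identity: the term $\int_\O\nabla_p\haz\xi_h\cdot(\haz m_h)_p$ is controlled by $\|\nabla_p\haz\xi_h\|_{L^2}\,\msat|\O|^{1/2}\to 0$, while $\int_\O h^{-1}\haz\xi_{h,3}\,\haz m_{h3} = \int_\O\zeta_h\,\haz m_{h3}$ converges to $\int_\O\widetilde m_3^2$ by the strong $L^2$ convergence $\zeta_h\to\widetilde m_3\chi_\O$ paired with the $L^2$ convergence $\haz m_{h3}\to\widetilde m_3$ (from the hypothesis $\haz m_h\chi_\O\to\widetilde m\chi_\O$ in $L^2$).

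The main obstacle I anticipate is the bootstrapping that turns the a priori bounds into the \emph{strong} convergences and, in particular, cleanly extracting that $\|\nabla_p\haz\xi_h\|_{L^2}\to 0$ rather than merely staying bounded; the energy identity is the right tool, but one must be careful that it is the only mechanism forcing the planar gradient to vanish, and that the argument does not secretly assume what it is proving. A secondary technical point is justifying the distributional limit of the $h$-scaled Maxwell equation on the unbounded domain $\R^3$ and the uniqueness of its $L^2$ solution. Since the statement is quoted from \cite[Prop.~4.1]{goia-james}, one may alternatively cite it directly, but the above is the self-contained route.
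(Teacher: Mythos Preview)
The paper does not give its own proof of this lemma; it simply cites \cite[Prop.~4.1]{goia-james}. Your proposal reconstructs precisely the Gioia--James argument: test the rescaled Maxwell equation against $\haz\xi_h$ to obtain the energy identity, extract uniform bounds, identify the weak limit of $h^{-1}\haz\xi_{h,3}$ from the $h$-scaled equation, and upgrade to strong convergence via convergence of norms. This is correct and is the standard route.

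The one point you flag as delicate --- showing $\nabla_p\haz\xi_h\to 0$ strongly rather than merely bounded --- closes cleanly once you first identify the \emph{weak} limit. Since $\haz\xi_{h,3}=h\zeta_h\to 0$ strongly and $\nabla_p\haz\xi_h$ is bounded, the full gradient $\nabla\haz\xi_h$ is bounded in $L^2(\R^3;\R^3)$; any weak limit is a gradient $\nabla\xi_0$ with $\xi_{0,3}=0$, so $\xi_0$ is independent of $z_3$, and then $\nabla_p\xi_0\in L^2(\R^3)$ being independent of $z_3$ forces $\nabla_p\xi_0=0$. With $\nabla_p\haz\xi_h\rightharpoonup 0$ and $\zeta_h\rightharpoonup\widetilde m_3\chi_\O$ in hand, the right-hand side of the energy identity converges (weak $\times$ strong) to $\int_\O\widetilde m_3^2$, and lower semicontinuity of the two left-hand terms forces equality in both, yielding the strong convergences. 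This removes any circularity from your sketch.

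A minor remark: the statement writes $\widetilde m,_3$ (a derivative) where it should read $\widetilde m_3\chi_\O$ (the third component, extended by zero), as is clear from \eqref{eq_lemma}; you correctly work with the latter.
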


The next result describes the limiting micromagnetic energy. It can be
 basically 
found in \cite[Thm.~4.1]{goia-james}  although for   $H=0$. The 
extension  to  $H\ne 0$ is straightforward since the 
Zeeman  term is linear in the magnetization.  We shall use
the notation  $\haz H(z_1,z_2):= \int_0^1 H(z_1,z_2,s)\md s$  
for  $(z_1,z_2)\in S$  (note however that in  most applications  the
external field can be considered to be constant in~$\O$).  With
respect to \cite{goia-james}, we present here a rephrasing of the
result in terms of $\Gamma$-convergence of the micromagnetic energies
\cite{braides,dalmaso}.  In particular, we
need to be postulating some limiting behavior of the sequence
$\varphi_h$. Instead of heading to maximal generality,  
we prefer to present here a specific yet relevant case by assuming the decomposition 
\begin{equation}
\varphi_h(z,m) = f(h)\varphi_p(z_p,m_p) +
\varphi_3(z,m)\label{ansatz}
\end{equation}
where $\varphi_p: S \times \{m \in \R^2 \ | \ |m| \leq \msat\} \to [0,\infty)$ and $\varphi_3:
\Omega \times \msat S^2 \to [0,\infty)$ are continuous and $f: (0,1) \to [0,\infty)$ decreases.
Under suitable coercivity assumptions on $\varphi_p$, the  first term
in the above right-hand side penalizes the planar components of
$m$. This would ideally correspond to the observed behavior of some
ultrathin films showing a very strong magnetic anisotropy in the off-plane
direction \cite{Bruno88,Schulz94}.  

Along with the above structural {\it Ansatz} \eqref{ansatz}, the magnetic potential 
 $\varphi_h$ converges pointwise and monotonically to
$$\varphi_0(z,m) = f(0+)\varphi_p(z_p,m_p) + \varphi_3(z,m)$$
where the term $f(0+)\varphi_p$ has to be intended as the
constraint $\{\varphi_p=0\}$ in case $f(0+) = \infty$. Along with this
convention, we have
the $\Gamma$-convergence of the magnetic energy terms in terms of
the strong topology of $L^2$. 

\begin{proposition}[$\Gamma$-convergence of the micromagnetic energies]\label{gamma1}
 $E^{\rm mag}_h$ $\Gamma$-converges strongly in $L^2$ to $E^{\rm
  mag}_0$ given by 
$$
E^{\rm mag}_0(m):=
\begin{cases}
E^{\rm mag}_p(m)&:=\displaystyle\frac{1}{|S|}
\int_S\left(\alpha| \nabla_p  m|^2+ \varphi_0(z, \EEE m)- \haz H\cdot
  m+\frac12m_3^2\right)\,\md z_1\md z_2 \\
&\hspace{8mm}\mbox{if  $m\in W^{1,2}(\O;\R^3)$, $|m|=m_{\rm sat}$, and $m,_3=0$},\\
+\infty &\mbox{otherwise.}
\end{cases}
$$
\end{proposition}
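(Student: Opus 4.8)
The plan is to verify the two defining inequalities of $\Gamma$-convergence in the strong $L^2$ topology, namely the liminf inequality and the existence of a recovery sequence, exploiting the explicit structure of $E^{\rm mag}_h$ and the already-available Lemma~\ref{gioia}. Throughout I would use the decomposition of $E^{\rm mag}_h$ into the exchange term, the anisotropy term $\varphi_h$, the (linear) Zeeman term, and the stray-field term, treating each contribution separately. Since the Zeeman term $m \mapsto -\int_\O H\cdot m\,\md z$ is continuous with respect to strong $L^2$ convergence and reduces to $-\int_S \haz H\cdot m\,\md z_1\md z_2$ once $m,_3=0$, it plays no role in the inequalities and can be carried along passively.

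First I would prove the \textbf{liminf inequality}. Let $m_h \to m$ strongly in $L^2(\O;\R^3)$. If $\liminf_{h\to 0} E^{\rm mag}_h(m_h) = +\infty$ there is nothing to prove, so assume along a subsequence that $E^{\rm mag}_h(m_h)$ is bounded. The exchange term gives a uniform bound on $\|\nabla_p m_h\|_{L^2}$ and, crucially, $\frac{\alpha}{|S|h^2}\int_\O |m_{h},_3|^2 \le C$ forces $\|m_{h},_3\|_{L^2} \le Ch \to 0$; hence $m \in W^{1,2}(\O;\R^3)$ with $\nabla_p m_h \rightharpoonup \nabla_p m$ and $m,_3=0$, and the saturation constraint $|m|=\msat$ passes to the limit by strong $L^2$ convergence (so $m$ is admissible and independent of $z_3$, i.e.\ effectively defined on $S$). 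Weak lower semicontinuity of the convex exchange functional yields $\liminf \frac{\alpha}{|S|}\int_\O |\nabla_p m_h|^2 \ge \frac{\alpha}{|S|}\int_S|\nabla_p m|^2$. For the anisotropy term I would use the \emph{Ansatz} \eqref{ansatz}: write $\varphi_h(z,m_h) = f(h)\varphi_p(z_p,(m_h)_p) + \varphi_3(z,m_h)$; since $\varphi_3$ is continuous and $m_h\to m$ in $L^2$ (up to a further subsequence a.e.), Fatou gives $\liminf \int_\O \varphi_3(z,m_h) \ge \int_\O \varphi_3(z,m) = |S|\int_S\varphi_3$, while for the planar part, using $f(h)\nearrow f(0+)$ as $h\to 0$ together with nonnegativity and Fatou (and, in the case $f(0+)=\infty$, concluding $\varphi_p(z_p,m_p)=0$ a.e.\ from the boundedness of $f(h)\int\varphi_p(z_p,(m_h)_p)$), one obtains $\liminf f(h)\int_\O\varphi_p(z_p,(m_h)_p) \ge |S| f(0+)\int_S\varphi_p(z_p,m_p)$, so that altogether $\liminf\int_\O\varphi_h(z,m_h)\,\md z \ge |S|\int_S\varphi_0(z,m)\,\md z_1\md z_2$. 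Finally, for the stray-field term I would invoke Lemma~\ref{gioia} with $\haz m_h := m_h$ and $\widetilde m := m$: since $m_h\chi_\O \to m\chi_\O$ in $L^2(\R^3;\R^3)$ and $|m_h|=\msat$, the lemma gives that the stray-field contribution converges (not merely liminf) to $\frac12\int_S m_3^2\,\md z_1\md z_2$. Summing the four contributions gives $\liminf_{h\to0} E^{\rm mag}_h(m_h) \ge E^{\rm mag}_0(m) = E^{\rm mag}_p(m)$.

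Next I would construct the \textbf{recovery sequence}. Given $m$ with $E^{\rm mag}_0(m)<\infty$ — so $m\in W^{1,2}(\O;\R^3)$, $|m|=\msat$, $m,_3=0$, and, when $f(0+)=\infty$, $\varphi_p(z_p,m_p)=0$ a.e. — the natural choice is the constant-in-$z_3$ recovery sequence $m_h := m$ for all $h$. Then $m_{h},_3 = 0$, so the singular $\frac{1}{h^2}$ exchange term vanishes identically and the exchange contribution equals $\frac{\alpha}{|S|}\int_S|\nabla_p m|^2$ for every $h$; the Zeeman term equals $-\frac{1}{|S|}\int_\O H\cdot m\,\md z \to -\frac{1}{|S|}\int_S \haz H\cdot m\,\md z_1\md z_2$ by Fubini; the stray-field term converges to $\frac12\int_S m_3^2\,\md z_1\md z_2$ by Lemma~\ref{gioia} applied with $\widetilde m = m$ (constant sequence); and for the anisotropy term, $\int_\O \varphi_h(z,m)\,\md z = f(h)\int_\O\varphi_p(z_p,m_p)\,\md z + \int_\O\varphi_3(z,m)\,\md z$, where the second summand is $|S|\int_S\varphi_3(z,m)$ and the first converges to $|S|f(0+)\int_S\varphi_p(z_p,m_p)\,\md z_1\md z_2$ by monotone convergence ($f(h)\nearrow f(0+)$) — this limit being $0$ exactly in the constrained case $f(0+)=\infty$, consistent with the convention in the statement. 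Hence $\lim_{h\to 0} E^{\rm mag}_h(m) = E^{\rm mag}_0(m)$, which is a legitimate (constant) recovery sequence converging strongly in $L^2$.

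The step I expect to require the most care is the treatment of the anisotropy term under the \emph{Ansatz} \eqref{ansatz} in the \emph{singular} regime $f(0+)=\infty$: one must argue cleanly that a liminf-bounded sequence is forced into the limiting constraint $\{\varphi_p(\cdot,m_p)=0\}$ and that this constraint is exactly what the $\Gamma$-limit encodes, which is where the monotonicity of $f$ and the nonnegativity and continuity of $\varphi_p$ are all used. The rest — weak lower semicontinuity of the exchange energy, passing the pointwise saturation and $m,_3=0$ constraints to the limit, and the reduction of $z$-integrals on $\O$ to $z_1,z_2$-integrals on $S$ via Fubini — is routine, and the genuinely nontrivial analytic input, the behavior of the stray field, is entirely supplied by Lemma~\ref{gioia}.
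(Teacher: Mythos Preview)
Your proof is correct and follows essentially the same approach as the paper: constant recovery sequence $m_h=m$ together with Lemma~\ref{gioia} for the limsup, and energy boundedness forcing $m,_3=0$ plus Lemma~\ref{gioia} for the liminf. Your argument is in fact considerably more detailed than the paper's terse treatment---in particular you spell out the anisotropy term under the \emph{Ansatz}~\eqref{ansatz}, including the singular case $f(0{+})=\infty$, and the weak lower semicontinuity of the exchange energy, points the paper's proof leaves implicit.
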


\begin{proof}
 The existence of a recovery sequence follows by pointwise
convergence. Let $m \in \MM$ with $m,_3=0$. Then, the
constant sequence $m_h = m$ satisfies $$\lim_{h\to 0}E^{\rm
  mag}_h(m)= E^{\rm mag}_0(m)$$ due to Lemma \ref{gioia}. If on the
contrary $m,_3\not =0$ then $E^{\rm
  mag}_h(m) \to \infty$.

Let now $m_h \in \MM$ converge strongly in $L^2$ to $m\in \MM$.
As we are interested in checking that $\liminf_h E^{\rm mag}_h(m_h)
\geq E^{\rm mag}_0(m)$ we may assume with no loss of generality that 
$\liminf_h E^{\rm mag}_h(m_h)<\infty$ or even that $ E^{\rm
  mag}_h(m_h)$ is uniformly bounded. This entails in particular that
$m,_3=0$. It is hence sufficient to use \eqref{eq_lemma} in order 
 to get the liminf inequality. 
\end{proof}

\subsection{ Magnetoelastic limit }
 Let us here consider the thin-film limit for the magnetoelastic problem. We shall rescale the magnetoelastic energy 
 $\bar E^{\rm elas}_h(\bar u,\bar m)$ 
  and obtain a magnetoelastic Kirchhoff-Love
plate theory. In particular,  for $x\in\Omega_h$ we let 
$$\bar u(x)=:Z_hu(Z_hx)= Z_hu(x_1,x_2,x_3/h)$$  
 so that $u:\Omega \to \R^3$. Correspondingly, we define the set $\UU : = \UU_1$.  
It follows that, for   $i,j=1,2,$ 
$$
\varepsilon_h(u):=Z_h\varepsilon (\bar u)Z_h= \left(\begin{array}{ccc}
\varepsilon(u)_{ij} & \displaystyle\frac{1}{h}\varepsilon(u)_{i3}\\[3mm]
\displaystyle\frac{1}{h}\varepsilon(u)_{i3} &\displaystyle\frac{1}{h^2}\varepsilon(u)_{33} \\
\end{array}\right).
$$
 Analogously the  scaling  for  $\varepsilon^{\rm mag}$ will be 
$$\varepsilon^{\rm mag}_h(m):=Z_h \varepsilon^{\rm mag}(\bar m(Z_h))Z_h,$$
so that, for $i,j = 1,2$,
$$
\varepsilon^{\rm mag}_h(m):=\left(\begin{array}{ccc}
(\varepsilon^{\rm mag}(m))_{ij} & \displaystyle\frac{1}{h}(\varepsilon^{\rm mag}( m))_{i3}\\[3mm]
\displaystyle\frac{1}{h}(\varepsilon^{\rm mag}(m))_{i3} & \displaystyle\frac{1}{h^2}(\varepsilon^{\rm mag}(m))_{33} \\
\end{array}\right).
$$
As to boundary conditions, we consider
\begin{align}\label{K-L-boundary}
 u^{\rm Dir}\in  \KK:=\{u \in  W^{1,2}(\O;\R^3)\ : \ \varepsilon(u)_{3i}=\varepsilon(u)_{i3}=0 \mbox{ for } i=1,2,3 \}\end{align}
and set  (analogously to the choice for $u$) 
$\bar u^{\rm Dir}_h(x):=Z_h  u^{\rm Dir}  (Z_hx)$ for  $x\in\O_h$. 
The  space $\KK$   in \eqref{K-L-boundary}  represents the
admissible displacements for  Kirchhoff-Love  plates. In particular, $u \in \KK$ entails 
$$ u_{1,3}+u_{3,1} = u_{2,3}+u_{3,2}=u_{3,3}=0.$$
Namely $u_3$ is constant in direction $e_3$ and $u_1, u_2$ are affine
in direction $e_3$.   

 We shall define the energy $E^{\rm elas}_h$ on the  rescaled 
 domain $\O$
 via $E^{\rm elas}_h(u, m)=\bar E^{\rm elas}_h(\bar u,\bar m)$. In
particular, we have 

\begin{align}
 E^{\rm elas}_h(u, m)
:=\frac{1}{2|S|}\int_{\O}\mathbb{C}\big(\varepsilon_h(u{+}u^{\rm Dir}){-}\varepsilon_h^{\rm mag}(m)\big){:}\big(\varepsilon_h(u{+}u^{\rm Dir}){-}\varepsilon_h^{\rm mag}(m)\big)\,\md z .
\end{align}

Let us denote  the set of admissible states by 
\begin{align}
\mathcal{Q}:= \left\{(u,m)\in\UU \times\MM \ : \
  \varepsilon(u)_{i3}=\varepsilon^{\rm mag}(m)_{i3} \ \text{for} \ i=1,2,3\right\}.\nonumber
\end{align}

Then,  $E^{\rm elas}_h $ admits a minimizer in
$\mathcal{Q}$. Moreover, the component $u$ of such minimizer depends
linearly on the component $m$. In particular, given $m$, the
displacement $u$ is uniquely
determined.

 In order to discuss the limiting case $h\to 0$, by  following
\cite{braides} or \cite{freddi-paroni-zanini}   we define, for
$i,j,k,\ell=1,2$,  the limiting elasticity tensor $\mathbb{C}^0$
as 
 \begin{align}
 \mathbb{C}^0_{ijk\ell}:=\mathbb{C}_{ijk\ell}-\frac{\mathbb{C}_{ij33}\mathbb{C}_{k\ell33}}{\mathbb{C}_{3333}} .
 \end{align}
 For all $ A\in\R^{2\times 2}$ we let the quadratic form $Q:\R^{2\times 2} \to [0,\infty)$ be defined as 
  $$Q(A):=\min_{a\in\R^{2\times  1 },b\in\R}\mathbb{C}\left(\begin{array}{ccc}
  A & a\\
a^\top & b \\
\end{array}\right): \left(\begin{array}{ccc}
  A & a\\
a^\top & b \\
\end{array}\right) .
$$
 One readily checks that the  minimum is achieved at
$$a=0 \ \ \text{and} \ \ b= - \frac{\mathbb{C}_{ij33} A_{ij}}{\mathbb{C}_{3333}} .$$
 In particular,  we have that
$Q(A)=\mathbb{C}^0 A{:} A$ for all $A\in \R^{2\times 2}$ and that $Q$ is
 uniformly convex on $\R^{2\times 2}$.  Let us use the notation $\varepsilon_p \in \R^{2\times 2}$ in order to indicate the {\it
  planar} block of the matrix $\varepsilon \in \R^{3\times 3}$, namely
$(\varepsilon_p)_{ij} = (\varepsilon)_{ij}$ for $i,j
=1,2$. We have the following.

\begin{proposition}[$\Gamma$-convergence of the magnetoelastic energies]\label{Prop-recovery}
For all $u^{\rm Dir} \in \KK$ we have that $E^{\rm elas}_h$ $\Gamma$-converges to $ {E}^{\rm elast}_0$ with
respect to the weak topology of $W^{1,2}$ where
$$
 {E}^{\rm elast}_0(u,m):=
\begin{cases}
 E^{\rm elas}_p(u,m) &:= \displaystyle\frac{1}{2|S|} \int_{
  \O }Q\big(\varepsilon_p(u){+}\varepsilon_p(u^{\rm Dir}(t)){-}\varepsilon_p^{\rm
  mag}(m)\big) \md z\\
 \qquad & \quad \mbox {if  $(u,m)\in \mathcal{Q}$},\\
+\infty &\quad \mbox{otherwise.}
\end{cases}
$$
\end{proposition}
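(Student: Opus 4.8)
The plan is to establish the two standard ingredients of a $\Gamma$-convergence statement: a $\liminf$ inequality along arbitrary weakly converging sequences, and the existence of a recovery sequence for every target state. Throughout, the boundary datum $u^{\rm Dir}\in\KK$ is fixed, so $\varepsilon(u^{\rm Dir})_{i3}=0$ for $i=1,2,3$ and $\varepsilon_p(u^{\rm Dir})$ is the only surviving block; in particular $u+u^{\rm Dir}$ has the same off-plane strain constraints as $u$ itself, and membership in $\mathcal{Q}$ is unaffected by adding $u^{\rm Dir}$.

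First I would prove the $\liminf$ inequality. Let $(u_h,m_h)\in\UU\times\MM$ with $u_h\w u$ weakly in $W^{1,2}(\O;\R^3)$ and $m_h\to m$ (the magnetization part is controlled by the previous subsection; here only $u$ matters, and one may as well assume $m_h\to m$ strongly in $L^2$ since otherwise the magnetic energy is irrelevant to this proposition). We may assume $\liminf_h E^{\rm elas}_h(u_h,m_h)<\infty$. The coercivity of $\mathbb{C}$ then forces each rescaled off-plane component $\tfrac1h\varepsilon(u_h+u^{\rm Dir})_{i3}-\tfrac1h\varepsilon^{\rm mag}(m_h)_{i3}$ and $\tfrac1{h^2}\varepsilon(u_h+u^{\rm Dir})_{33}-\tfrac1{h^2}\varepsilon^{\rm mag}(m_h)_{33}$ to be bounded in $L^2$; multiplying by $h$ (resp.\ $h^2$) and using weak lower semicontinuity of the norm under $\varepsilon(u_h)\w\varepsilon(u)$, together with $\varepsilon^{\rm mag}(m_h)\to\varepsilon^{\rm mag}(m)$ in $L^2$ (continuity of $\varepsilon^{\rm mag}$ plus $|m_h|=\msat$), one gets $\varepsilon(u)_{i3}=\varepsilon^{\rm mag}(m)_{i3}$ for $i=1,2,3$, i.e.\ $(u,m)\in\mathcal{Q}$. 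For the inequality itself, write the integrand of $E^{\rm elas}_h$ pointwise and bound it below by minimizing over the off-plane entries: denoting $A_h:=\varepsilon_p(u_h+u^{\rm Dir})-\varepsilon_p^{\rm mag}(m_h)$, one has $\mathbb{C}\,\varepsilon_h(\cdot):\varepsilon_h(\cdot)\ge Q(A_h)$ because $Q$ is exactly that partial infimum. Now $A_h\w A:=\varepsilon_p(u+u^{\rm Dir})-\varepsilon_p^{\rm mag}(m)$ weakly in $L^2$, and since $Q$ is a nonnegative quadratic (hence convex) form, $A\mapsto\int_\O Q(A)$ is weakly lower semicontinuous; this yields $\liminf_h E^{\rm elas}_h(u_h,m_h)\ge \tfrac1{2|S|}\int_\O Q(A)=E^{\rm elas}_0(u,m)$.

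Next, the recovery sequence. Given $(u,m)\in\mathcal{Q}$, the natural candidate is a \emph{constant} sequence $m_h=m$ together with a suitably corrected $u_h$: set $u_h:=u+h\,w^{(1)}+h^2 w^{(2)}$ where $w^{(1)},w^{(2)}$ are chosen so that the rescaled off-plane strains converge to the optimal values identified in the formula for $Q$, namely the partial minimizers $a=0$, $b=-\mathbb{C}_{ij33}A_{ij}/\mathbb{C}_{3333}$ with $A=\varepsilon_p(u+u^{\rm Dir})-\varepsilon_p^{\rm mag}(m)$. Concretely one wants $\tfrac1h\varepsilon(u_h+u^{\rm Dir})_{i3}-\tfrac1h\varepsilon^{\rm mag}(m)_{i3}\to 0$ and $\tfrac1{h^2}\varepsilon(u_h+u^{\rm Dir})_{33}-\tfrac1{h^2}\varepsilon^{\rm mag}(m)_{33}\to b$; since $(u,m)\in\mathcal{Q}$ already kills the leading-order terms, the correctors need only absorb the $O(h)$ discrepancy, and an explicit choice linear in $z_3$ (for $w^{(1)}$) and the zeroth-order profile times $z_3^2/2$ (for $w^{(2)}$) does the job — this is the standard Kirchhoff--Love recovery construction, and one checks $u_h\w u$ in $W^{1,2}$ (in fact $u_h\to u$ strongly) while $\varepsilon_p(u_h)\to\varepsilon_p(u)$ in $L^2$. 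Plugging into $E^{\rm elas}_h$, the planar block contributes $\tfrac1{2|S|}\int_\O \mathbb{C}(A,0;0,b):(A,0;0,b)=\tfrac1{2|S|}\int_\O Q(A)$ in the limit, and the off-plane cross-terms vanish, giving $\limsup_h E^{\rm elas}_h(u_h,m)\le E^{\rm elas}_0(u,m)$. Combined with the $\liminf$ bound and the fact that $E^{\rm elas}_0\equiv+\infty$ off $\mathcal{Q}$ (where no finite-energy sequence can converge, by the argument above), this proves the claimed $\Gamma$-convergence.

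\textbf{Main obstacle.} I expect the only real subtlety to be the recovery step: one must exhibit correctors $w^{(1)},w^{(2)}$ that are genuinely admissible (in particular compatible with the rescaled Dirichlet condition on $\Gamma_h$, which they are since a $z_3$-polynomial correction vanishes on $\Gamma_h$ only if constructed with care, or one argues the Dirichlet constraint sits in the lower-order part and passes to the limit) and such that the $L^2$ convergence of the rescaled strain blocks to the pointwise minimizers of $Q$ holds — including the subtle point that $b$ above is exactly the minimizing off-plane normal strain and $a=0$ the minimizing shear, so the recovery is sharp. The $\liminf$ half is essentially soft: coercivity of $\mathbb{C}$ to get the $\mathcal{Q}$-constraint in the limit, plus convexity of $Q$ for weak lower semicontinuity of the reduced functional.
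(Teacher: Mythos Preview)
Your proposal is correct and follows essentially the same route as the paper: the $\liminf$ half uses coercivity of $\mathbb{C}$ to force $(u,m)\in\mathcal{Q}$ and then the pointwise bound by $Q$ plus weak lower semicontinuity of the convex integral; the $\limsup$ half keeps $m_h=m$ and corrects only $u$ so that the rescaled off-plane block converges to the pointwise minimizer $(a,b)=(0,-\mathbb{C}_{ij33}A_{ij}/\mathbb{C}_{3333})$ of the quadratic form.

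The only noteworthy difference is in the recovery construction. You sketch correctors of the form $u_h=u+hw^{(1)}+h^2w^{(2)}$ with $w^{(2)}$ a $z_3$-polynomial whose coefficient is the ``zeroth-order profile'' $b$. Since $b\in L^2(\Omega)$ in general depends on $z_3$ and has no a priori regularity in $(z_1,z_2)$, this literal Ansatz need not lie in $W^{1,2}$. The paper circumvents this by correcting only the third component, $\widetilde u_{h3}=u_3+h^2\phi_h$, where $\phi_{h,3}=\psi_h$ and $\psi_h\in C^\infty_0(\Omega)$ is chosen so that $\psi_h\to b$ in $L^2$ \emph{and} $h\nabla\psi_h\to 0$ in $L^2$; the latter condition kills the spurious contribution $\tfrac{h}{2}\partial_{x_i}\phi_h$ appearing in the rescaled shear entries. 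The compact support of $\psi_h$ automatically handles the Dirichlet constraint on $\Gamma$, which you correctly flagged as the delicate point. No first-order corrector $w^{(1)}$ is needed here precisely because the optimal shear is $a=0$ and the constraint $(u,m)\in\mathcal{Q}$ already makes the $1/h$ terms vanish identically.
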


\begin{proof}
 Let $(u_h,m_h)\to (u,m)$ weakly in $W^{1,2}$ and assume with no
loss of generality that $\liminf_{h\to 0}
E^{\rm elas}_h(u_h,m_h)<\infty$ or even, possibly extracting but not
relabelling,  that $E^{\rm elas}_h(u_h,m_h)$ are uniformly bounded. Then, since
$u^{\rm Dir} \in \KK$, the
limit $(u,m)$ belongs necessarily to $\mathcal{Q}$. Hence,  the 
definition of $Q$ and  its  lower semicontinuity  imply 
that  $\liminf_{h\to 0}
E^{\rm elas}_h(u_h,m_h)\ge {E}^{\rm elas}_p(u,m)$.  

On the other hand, consider $(u,m)\in\mathcal{Q}$.  Define  $b\in
L^2(\O)$  via 
\begin{align}\label{b}
b:= - \frac{\mathbb{C}_{ij33} (\varepsilon(u{+}u^{\rm Dir})_{ij}-\varepsilon^{\rm mag}(m)_{ij})}{\mathbb{C}_{3333}}\ \end{align}
and a sequence $\{\psi_h\}_{h>0}\subset C^\infty_0(\O)$ such that
$\psi_h\to b$ in $L^2(\O)$ and $h\nabla\psi_h\to 0$ in
$L^2(\O;\R^3)$. Let $\phi_h$ be such that
 $\phi_{h,3} := \psi_h$ in $\Omega$. 
Define  for $h>0$ and  i=1,2,3, 
$$
\widetilde u_{hi} :=
\begin{cases}
u_i & \mbox{ for $i =1,2$,}\\
u_i +h^2\phi_h & \mbox{ for $i=3$.}
\end{cases}
$$
Hence, 
\begin{align}
&\varepsilon_h(\widetilde u_h{+}u^{\rm Dir})-\varepsilon_h^{\rm
  mag}(m)\nonumber\\
&= \left(\begin{array}{cc}
  \varepsilon(\widetilde u_h{+}u^{\rm Dir})_{p}{-}\varepsilon^{\rm
    mag}(m)_{p} & \displaystyle\frac{(\varepsilon(\widetilde u_h{+}u^{\rm Dir})_{i3}{-}\varepsilon^{\rm mag}(m)_{i3})}{h}{+} \frac{h}{2}\frac{\partial\phi_h}{\partial x_i}\\
 \displaystyle\frac{(\varepsilon(\widetilde u_h{+}u^{\rm
   Dir})_{3i}{-}\varepsilon^{\rm mag}(m)_{3i})}{h}{+}\displaystyle \frac{h}{2}\frac{\partial\phi_h}{\partial x_i}&\!\!\!\! \psi_h\\
\end{array}\right).\nonumber
\end{align}
 As $u^{\rm Dir} \in \KK$, we have 
$\varepsilon(u{+}u^{\rm Dir})_{i3}-\varepsilon^{\rm mag}(m)_{i3}=0$.  In
particular, the terms with factor $1/h$  vanish.  Then,
passing to the $\liminf$ for $h \to 0$ we readily get that $E^{\rm
  elas}_h(\widetilde u_h, m) \to E^{\rm elas}_p(u,m)$. In particular,
$(\widetilde u_h, m) $ is a recovery sequence for $(u,m)$. 

In case $(u,m) \not \in \mathcal{Q}$, the same choice $(\widetilde u_h, m)
$ entails $E^{\rm elas}_h(\widetilde u_h, m)\to \infty$. Namely, 
$(\widetilde u_h, m)$ still provides a  recovery sequence.
\end{proof}
 
 The rescaled total magnetoelastic energy $E_h$ is defined
on the domain $\O$ via $E_h(u,m) = \bar E_h(\bar u, \bar m)$. In particular, we have
$$
   E_h(u,m) = E^{\rm mag}_h(m) + E^{\rm elas}_h(u,m).
$$

\section{  Quasistatic  evolution}\label{evo}

 Let us now turn to the analysis of the  quasistatic 
evolution case. The aim here is to introduce and analyze a rate-independent
model for  a  magnetoelastic Kirchhoff-Love plate. We obtain this by
dimension reduction, by passing to the limit in $h>0$ for a
three-dimensional magnetoelastic evolution model. In particular, we
detail in Subsection \ref{analysis} the  quasistatic  evolution problem
for the three-dimensional specimen and discuss in Subsection \ref{KL}
the evolutive thin-film limit  constituing  the plate model. 

\subsection{  Quasistatic  evolution in the bulk}\label{analysis}
\setcounter{equation}{0}

By possibly assuming the Dirichlet datum $\bar u^{\rm Dir}_h$ and/or
the external field  $H$ to change with time, the minimizers $(\bar
u,\bar m)$ of $\bar E_h$ evolve as well. In order to prescribe a
suitable evolution law, we postulate magnetic dissipation. In
particular, by assuming that the changes in the data are so slow that
inertial effects can be neglected, we assume that the (time-dependent)
state of the system $t \mapsto (\bar u(t),\bar m(t))$ solves relations \eqref{sys} on
$[0,T]$ (and in a
suitable variational sense, see below) where however the static
relation \eqref{m} is replaced by the rate-independent inclusion
$$ 
 \partial \psi(\bar m_t) - \alpha \Delta \bar m +  \nabla
_{\bar m}\bar \varphi_h(x,  \bar m) +\frac12 \nabla
    \bar
    \xi \ni \bar H \quad \text{in} \  \Omega_h \times (0,T).$$
The symbol $\partial$ above indicates the subdifferential in the sense
of convex analysis and  $\psi(\bar m_t)$ measures the infinitesimal
dissipation involved in the process.  As the thickness $h$
decreases, an additional magnetic anisotropy effect arises. While bulk materials $h=1$
show isotropic dissipation, in the thin-film limit $h \to 0$
anisotropic dissipation can be observed
 \cite{uno,due,tre}. In particular, in some regimes  the 
dissipation tends to be larger for processes involving off-planar
magnetizations. We shall take this into account by choosing 
$$\psi(\bar m_t) = R_p |\bar m_{p,t}| + R_3(h)|\bar m_{3,t}|.$$
Here, $R_p >0$ is an energetic yield
limit for evolution in the plane \cite{hubert-schaefer}  which we
assume to be independent of the film thickness, for simplicity.  On
the other hand, the function $h \mapsto R_3(h) >0$ models
anisotropic effects in
the $e_3$ direction which are 
observed to be thickness-dependent \cite{quattro}. \EEE We shall here limit ourselves in assuming that the right limit $R_3(0_+)$ exists and is
finite. Note nonetheless that the case
$R_3(0_+)=\infty$, imposing indeed $\bar m_{3,t}=0$, could be considered as well.
 The latter
equation corresponds to the postulate that the
energy released  by changing the state of the system
from $(\bar u^1,\bar m^1)$ to $(\bar u^2,\bar m^2)$   is given by the
simple form 
 \begin{align}
 \bar D_h(\bar m^1,\bar m^2):= \frac{1}{|S|h}\int_{\O_h} \left(R_p|\bar
 m^1_p{-}\bar m^2_p| + R_3(h)|\bar
 m^1_3{-}\bar m^2_3|\right)\md x.  \nonumber
 \end{align}
Note that the
dissipation $\bar D_h$  is positively $1$-homogeneous  and, correspondingly, the  evolution will be
rate-independent. In particular, energy will be dissipated by purely
hysteretic losses.

For the sake of later convenience, let us reformulate the problem in
the fixed reference configuration $\Omega$. This amounts in 
considering  the energies $ E_h^{\rm mag}$, $ E_h^{\rm elas}$, and $
E_h $ (here assumed to be depending on time as well, without
introducing new notation) and the dissipation 
$D_h(m^1,m^2)=\bar D_h(\bar m^1,\bar m^2)$  so that 
$$
 D_h( m^1, m^2)=\frac{1}{|S|}\int_{\O} \left(R_p|
 m^1_p{-}m^2_p| +  R_3(h)|
 m^1_3{-}m^2_3|\right)\md x.\EEE
$$

Assume to be given time-dependent boundary datum $t \in [0,T]\mapsto \bar
u^{\rm Dir}(t) \in W^{1,2}(\O,\R^3)$ and 
external field $t \in [0,T]\mapsto  H(t) \in L^1(\O_h;\R^3)$. We are
interested in proving the existence of a  quasistatic  evolution $t\in [0,T] \mapsto ( u_h(t),m_h(t)) \in
\mathcal{Q} $ 
in the form of  the
so-called {\em energetic formulation} \cite{mielketheil}.  Given
some suitable initial datum 
 $(u^0, m^0) \in\mathcal{Q}$ 
we define it as follows. 

\begin{definition}[Energetic solution in the bulk]\label{Def:ES-bulk} 
 An \emph{energetic solution} of the  quasistatic   evolution  in
the bulk
 is a  trajectory  $t\in[0,T]\mapsto (  u_h(t),  m_h(t))
 \in \mathcal{Q} $ such that $(  u_h(0),  m_h(0))=(  u^0,  m^0)$ and, for every $t\in [0,T]$,
\begin{align}
&  E_h(t,  u_h(t),  m_h(t))\leq   E_h(t,\haz u,\haz m)+ 
D_h(  m_h(t),\haz m)\quad \forall\,  (\haz u,\haz m)\in  \mathcal{Q} \tag{S}\label{S}\\[2mm]
&  E_h(t,  u_h(t),  m_h(t))+\hbox{\rm Diss}_{  D_h}(  m_h,[0,t])
\nonumber\\
&\qquad =  E_h(0,  u^0,  m^0) +\int_0^t \partial_t E_h(s,  u_h(s),  m_h(s))\ds\tag{E}\label{E}
\end{align}
where  $\hbox{\rm Diss}_{  D_h}(  m_h,[0,t])$ is the \emph{total dissipation} on $[0,t]$ defined by
\begin{equation}\label{dissi}\hbox{\rm Diss}_{  D_h}(  m_h,[0,t]):=\sup\left\{\sum_{i=1}^{N}  D_h(  m_h(t^i),  m_h(t^{i-1})) \right\},
\end{equation}
the supremum being taken over all  partitions $ \{0=t^0 <t^1<
\ldots<t^N=t\}$ of $[0,t]$.
\end{definition}
 The two conditions \eqref{S}-\eqref{E} in the definition of
energetic solution have an immediate
mechanical interpretation. Condition \eqref{S} is a {\it global
  stability} criterion: Transitions from the actual state $( 
u(t),  m(t))$ to some possible competitor state $(\haz u,\haz m)$ is not
energetically favored in the sense that  the  energy gain is
compensated by the  dissipation cost.  
For later  notational  convenience, we define the set of {\it stable states} at time $t\in [0,T]$ as
\begin{align*}
\Stbl_h(t)&:=\Big\{(  u_h,  m_h)\in  
\mathcal{Q}: \  
  E_h(t,  u_h,  m_h)\leq  E_h(t,\haz u,\haz m){+}  D_h( 
m_h,\haz m),\  \forall\,(\haz u,\haz m)\in \mathcal{Q} 
\Big\}
\end{align*}
 so that condition \eqref{S} equivalently reads $( 
u_h(t),  m_h(t))\in \Stbl_h(t)$ for all $t \in [0,T]$. The scalar
equation \eqref{E} is nothing but {\it energy conservation}: It
expresses the balance between current and dissipated energy
 (left-hand side) 
and initial energy plus work of external actions
 (right-hand side). 

 Let us close this section by recording an   existence result
 for  quasistatic  evolutions in three dimensions.

\begin{theorem}[Existence for the  quasistatic  evolution in
 the bulk]\label{thcv2} Let $h>0$. Assume to be given  $H\in  C^1([0,T];  L^1(\O;\R^3))$,  $  u^{\rm
   Dir}_h\in  C^1([0,T];  W^{1,2}(\O;\R^3))$,  and 
 $(  u^0,  m^0)\in\Stbl_h(0)$. Then,  there exists an energetic
 solution $(  u_h,  m_h)$ for the  quasistatic  evolution problem.
\end{theorem}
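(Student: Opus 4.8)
The plan is to apply the by-now-standard abstract existence theory for energetic solutions of rate-independent systems (see \cite{mielketheil,mrs}), verifying the structural hypotheses for the triple $(\mathcal{Q}, E_h, D_h)$ for fixed $h>0$. The backbone is the time-incremental minimization scheme: for a partition $\{t^k_j\}_{j=0}^{N_k}$ of $[0,T]$ with fineness tending to zero, one sets $(u^k_0,m^k_0)=(u^0,m^0)$ and defines recursively
\begin{equation*}
(u^k_j,m^k_j)\in\Argmin_{(\haz u,\haz m)\in\mathcal{Q}}\Big(E_h(t^k_j,\haz u,\haz m)+D_h(m^k_{j-1},\haz m)\Big).
\end{equation*}
Existence of minimizers at each step follows from the direct method: $E_h(t,\cdot,\cdot)+D_h(m^k_{j-1},\cdot)$ is coercive on $\mathcal{Q}\subset\UU\times\MM$ — the elastic part controls $\varepsilon(u)$ hence (with the Dirichlet condition on $\Gamma_h$ and Korn's inequality) the $W^{1,2}$-norm of $u$, the exchange term controls $\nabla m$, and the saturation constraint bounds $m$ in $L^\infty$ — and it is weakly lower semicontinuous, using continuity of $\bar\varphi_h$ and $\varepsilon^{\rm mag}$, the linearity of the Zeeman term, and weak $W^{1,2}$-to-$L^2$ compactness for $m$; the stray-field term is weakly continuous since $\bar m\mapsto\nabla\bar\xi$ is a bounded linear map $L^2\to L^2$. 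The constraint set $\mathcal{Q}$ is weakly closed (the saturation constraint is preserved under strong $L^2$-convergence, and the linear constraints $\varepsilon(u)_{i3}=\varepsilon^{\rm mag}(m)_{i3}$ pass to the weak-strong limit). The dissipation $D_h$ is continuous on $L^1$ and, being a norm-type functional, lower semicontinuous along the relevant convergences.

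Next I would establish the two ingredients that power the abstract theorem. First, \emph{stability of the limit}: the set of stable states $\Stbl_h(t)$ is closed in the appropriate topology, and more precisely one proves the ``mutual recovery sequence'' property — given $(u,m)\in\Stbl_h(t)$ and a test competitor $(\haz u,\haz m)\in\mathcal{Q}$, and given any sequence $(u_n,m_n)\to(u,m)$ with $(u_n,m_n)\in\Stbl_h(t_n)$, $t_n\to t$, one must construct competitors $(\haz u_n,\haz m_n)\to(\haz u,\haz m)$ with $E_h(t_n,\haz u_n,\haz m_n)+D_h(m_n,\haz m_n)\to E_h(t,\haz u,\haz m)+D_h(m,\haz m)$. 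Here one simply takes $\haz m_n=\haz m$ (the magnetization competitor need not be perturbed, since $\haz m\in\MM$ already satisfies saturation and the constraints involve only $m$) and lets $\haz u_n$ be the unique elastic minimizer associated with $\haz m$ at the constraint $\varepsilon(u)_{i3}=\varepsilon^{\rm mag}(\haz m)_{i3}$, which is independent of $n$; continuity of $D_h$ in its first argument along $m_n\to m$ in $L^2$ (hence $L^1$, by boundedness) closes this point. Second, \emph{uniform control of the power}: from $H\in C^1([0,T];L^1)$ and $u^{\rm Dir}_h\in C^1([0,T];W^{1,2})$ one gets that $t\mapsto E_h(t,u,m)$ is $C^1$ with $\partial_t E_h$ bounded on bounded energy sublevels by a constant times $\|\dot H(t)\|_{L^1}+\|\dot u^{\rm Dir}_h(t)\|_{W^{1,2}}$ times $(1+E_h)$; a Gronwall argument on the discrete energy inequality then yields an a priori bound $\sup_k\sup_j E_h(t^k_j,u^k_j,m^k_j)\le C$ and $\sup_k\hbox{Diss}_{D_h}(\text{interpolant},[0,T])\le C$, uniformly in the partition.

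With these bounds in hand, the standard machinery runs: a generalized Helly selection principle (using the coercivity of $D_h$) extracts a subsequence of piecewise-constant interpolants $(\bar u_k,\bar m_k)$ such that $\bar m_k(t)\to m_h(t)$ in $L^2$ for every $t$, and, for each such $t$, $\bar u_k(t)\rightharpoonup u_h(t)$ in $W^{1,2}$ (using at each $t$ the energy bound and weak compactness, together with the fact that $u$ is the unique elastic minimizer determined by $m$, so the whole sequence converges). One then passes to the limit: the discrete stability inequalities converge to \eqref{S} via the mutual recovery sequence, giving $(u_h(t),m_h(t))\in\Stbl_h(t)$ for all $t$; the discrete energy inequality plus lower semicontinuity of dissipation gives ``$\le$'' in \eqref{E}, while the complementary ``$\ge$'' follows from testing the stability inequality along a fine partition and passing to the limit (the by-now-classical lower energy estimate argument). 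Combining the two yields the energy balance \eqref{E}, and $(u_h(0),m_h(0))=(u^0,m^0)$ holds by construction since $(u^0,m^0)\in\Stbl_h(0)$.

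I expect the main obstacle to be the \emph{construction of the mutual recovery sequence together with the selection of a pointwise-convergent subsequence for the elastic variable}: because $E_h$ is not coercive in $u$ on all of $\UU$ but only on the constraint set $\mathcal{Q}$ (where $u_3$ enters only through $\varepsilon(u)$), one must carefully use that, for fixed $m$, the map $m\mapsto u$ furnished by elastic minimization is linear and continuous, so that weak $L^2$-convergence (indeed pointwise-in-time convergence) of the magnetizations transfers to weak $W^{1,2}$-convergence of the displacements. Once this quasi-static slaving of $u$ to $m$ is in place, all remaining steps are routine applications of \cite{mielketheil,mrs}.
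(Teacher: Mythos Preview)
Your proposal is correct and follows the same route as the paper, which in fact gives no detailed proof at all: it simply observes that $E_h$ has weakly compact sublevels in $\mathcal{Q}$, that $D_h$ is continuous for the corresponding topology, and that $\partial_t E_h$ is well behaved, and then invokes \cite[Thm.~5.2]{mielke01}. Your write-up spells out these verifications (coercivity via Korn and the exchange term, weak lower semicontinuity, closedness of $\mathcal{Q}$, power control) and the incremental scheme behind the abstract theorem; the only minor slip is calling the map $m\mapsto u$ ``linear'' (it is $\varepsilon^{\rm mag}(m)\mapsto u$ that is affine, while $m\mapsto\varepsilon^{\rm mag}(m)$ is quadratic), but continuity is all that is needed and your argument goes through.
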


We shall not report here a  proof of Theorem \ref{thcv2} as it may be
readily obtained in the frame of the by now classical existence theory
for energetic solutions by {\sc Mielke \& Theil}
\cite{mielke01,mielketheil}. Indeed, it is sufficient to point out
that  $E_h$
has bounded (hence weakly compact) sublevels in $\mathcal{Q}$,
that $D_h$ is continuous with respect to the same topology, and that
the power $\partial_t E_h$ is well behaved in
order to apply, for instance, \cite[Thm. 5.2]{mielke01}.

\subsection{ Quasistatic  evolution of the magnetoelastic thin-film}\label{KL}

Let us now come to the description of the magnetoelastic thin-film,
which results in a
Kirchhoff-Love plate model. We shall derive this by taking the limit
$h\to 0$ in the three-dimensional evolution model. The state of the material will be
described by the pair 
\begin{align*}
(u,m)&\in \QQQ :=  \{ (u,m) \in \mathcal{Q} \ \ 
 \text{such that} \  m_{{,3}}=0\}\\
& = \{(u,m) \in W^{1,2}_0(\O,\R^3)\times W^{1,2} (\O,\R^3) \
 \text{such that }\\
&\hspace{5mm}\varepsilon_{i3}(u) =  \varepsilon^{\rm mag}(m)_{i3} \ \text{for} \ i=1,2,3, \ |m|=\msat, \ m,_3=0 \
\text{a.e.}
\}
\end{align*}
 and its statics will
correspond to the minimization of the thin-limit energies of Section
\ref{dimension-reduction}.  
 In the following, the boundary datum and the external
field are time-dependent and will be driving the  quasistatic 
evolution of the medium. Correspondingly, we will indicate
time-dependence in the total energy of the medium  as
$ E_0(t,u,m) = E^{\rm mag}_0(t,m) + E^{\rm elas}_0 (t,u,m)$. Note that
$E_0(t,\cdot)$  is finite on $\QQQ$.

As for the dissipation, 
%
%
for all $m^1,m^2\in\mathcal{M}$ which are hence constant in
the direction $e_3$, we  define 
\begin{align}\label{dissipation}
D_0(m^1,m^2):=  \frac{1}{|S|} \int_S  \left(R_p
|m^1_p(z){-}m^2_p(z)| + R_3(0_+)|m^1_3{-}m^2_3|\right)\md z . \end{align}
 
 Owing to these definitions, the  quasistatic  evolution problem
for the magnetoelastic thin film can be reformulated in
terms of a rate-independent evolution driven by the potentials
$( E_0,D_0)$. As before, we shall be interested in energetic
solutions.

\begin{definition}[Energetic solution for the thin film]\label{Def:ES-bulk2} 
 An \emph{energetic solution} of the  quasistatic   evolution  for
 the magnetoelastic thin film
 is a  trajectory  $t\in[0,T]\mapsto ( u(t), m(t))  \in
\QQQ  $ such that $( u(0), m(0))=( u^0, m^0)$ and, for every $t\in [0,T]$,
\begin{align}
& E_0(t, u(t), m(t))\leq  E_0(t,\haz u,\haz m)+ D_0( m(t),\haz m)\quad
\forall\,  (\haz u,\haz m)\in \QQQ \tag{S2}\label{S2}\\[2mm]
& E_0(t, u(t), m(t))+\hbox{\rm Diss}_{ D_0}( m,[0,t])\nonumber\\
&\qquad
= E_0(0, u^0, m^0) +\int_0^t \partial_t E_0(s, u(s), m(s))\ds\tag{E2}\label{E2}
\end{align}
where  $\hbox{\rm Diss}_{ D_0}( m,[0,t])$ is the \emph{total
  dissipation} on $[0,t]$ defined analogously to
$\hbox{\rm Diss}_{  D_h}$, but starting from the
dissipation $D_0$. 
\end{definition}

 Let us denote by  $\Stbl_0(t)$ 
the set of
stable states at time $t$, namely  of 
pairs $(u,m) \in \QQQ$ fulfilling \eqref{S2}.

We shall now prove that  energetic solutions to the   quasistatic 
evolution problem for the 
magnetoelastic Kirchhof-Love plate exist. Indeed, our result is 
stronger, as we prove that sequences of solution of the bulk model
admit subsequences which converge to energetic solution of the
thin-film model. In particular, we provide an approximation
result based on dimension reduction.

\begin{theorem}[Convergence to the thin film]\label{thm}
Let $u^{\rm Dir}\in C^1([0,T];W^{1,2}(\O;\R^3)$, $H\in
C^1([0,T]; L^1(\R^3))$,  $ (u^0,m^0)\in \Stbl_0(0)$, $ E_h(0,u^0,m^0)\to  E_0(0,u^0,m^0)$,  and 
$\{(  u_h,  m_h)\}_{h>0}\subset\mathcal{Q}_h$ be  a sequence of
energetic solutions  of the  quasistatic  evolution in three dimensions, i.e. solving \eqref{S}-\eqref{E}. Then, for some not relabeled
subsequence we have that $(  u_h,  m_h)\rightharpoonup (u,m)$ in
$W^{1,2}(\O;\R^3)\times W^{1,2}(\O;\R^3)$ where $(u,m)$ is an
energetic solution for the plate, i.e. solving\eqref{S2}-\eqref{E2}.
\end{theorem}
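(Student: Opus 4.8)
The plan is to apply the abstract evolutionary $\Gamma$-convergence machinery for rate-independent systems (in the spirit of \cite{mielke01,mielketheil}, and the refined version in \cite{mrs}) to the family of systems with energy--dissipation pairs $(E_h,D_h)$ converging to $(E_0,D_0)$. I would organize the argument along the classical four-step scheme for passage to the limit in energetic formulations. \textbf{Step 1: a priori bounds and compactness.} From the energy balance \eqref{E} for $(u_h,m_h)$, combined with the uniform (in $h$) coercivity of $E_h$ on $\mathcal{Q}_h$ and the boundedness of the power $\partial_t E_h$, one obtains that $E_h(t,u_h(t),m_h(t))$ and $\mathrm{Diss}_{D_h}(m_h,[0,t])$ are bounded uniformly in $h$ and $t$. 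The dissipation bound, since $D_h$ controls $\|m_h^1-m_h^2\|_{L^1}$ from below uniformly, gives equi-boundedness of $m_h$ in $BV([0,T];L^1(\O;\R^3))$; together with the $W^{1,2}$-bound in space (from the exchange energy) one can extract, via a Helly-type selection principle, a (not relabeled) subsequence and a limit $t\mapsto m(t)$ with $m_h(t)\rightharpoonup m(t)$ in $W^{1,2}$ for every $t$, and by the penalization $h^{-2}\|m_{h,3}\|_{L^2}^2$ in $E^{\rm mag}_h$ the limit satisfies $m_{,3}=0$. The corresponding $u_h(t)$, being the unique minimizer of the (strictly convex, quadratic) elastic energy given $m_h(t)$ and the boundary datum, is bounded in $W^{1,2}$ (after using a Korn-type inequality compatible with the Kirchhoff--Love structure as encoded in $\mathcal{Q}$ and $\mathcal{K}$), so $u_h(t)\rightharpoonup u(t)$ as well, with $(u(t),m(t))\in\QQQ$.

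\textbf{Step 2: liminf inequalities and recovery sequences.} Here I would invoke Propositions \ref{gamma1} and \ref{Prop-recovery}: $E^{\rm mag}_h\ \Gamma$-converges to $E^{\rm mag}_0$ strongly in $L^2$ and $E^{\rm elas}_h\ \Gamma$-converges to $E^{\rm elas}_0$ weakly in $W^{1,2}$; since for the relevant sequences the strong-$L^2$ and weak-$W^{1,2}$ convergences both hold (by the compactness of Step 1 and Rellich), one gets the combined liminf inequality $\liminf_h E_h(t,u_h(t),m_h(t))\ge E_0(t,u(t),m(t))$ for every $t$, and likewise, by lower semicontinuity of the total variation functionals under pointwise-in-time $L^1$ convergence, $\liminf_h \mathrm{Diss}_{D_h}(m_h,[0,t])\ge \mathrm{Diss}_{D_0}(m,[0,t])$ — for the latter one uses that $R_3(h)\to R_3(0_+)$ so that $D_h\to D_0$ locally uniformly, hence $\liminf$-lower semicontinuity transfers. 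For the upper bound one needs the $\Gamma$-limsup (recovery) sequences: given a competitor $(\haz u,\haz m)\in\QQQ$ for \eqref{S2}, the recovery sequences constructed in the proofs of Propositions \ref{gamma1} and \ref{Prop-recovery} (the constant-in-$h$ magnetization together with the corrector $\widetilde u_h$ carrying the $h^2\phi_h$ term) furnish $(\haz u_h,\haz m_h)\in\mathcal{Q}_h$ with $E_h(t,\haz u_h,\haz m_h)\to E_0(t,\haz u,\haz m)$ and $D_h(m_h(t),\haz m_h)\to D_0(m(t),\haz m)$, the latter again by $\haz m_h\to\haz m$ in $L^1$ and $D_h\to D_0$.

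\textbf{Step 3: stability of the limit.} To pass \eqref{S} to \eqref{S2} one uses the standard ``mutual recovery sequence'' argument of Mielke--Roub\'\i\v cek--Stefanelli: fix $t$ and a competitor $(\haz u,\haz m)\in\QQQ$; build from it a mutual recovery sequence $(\haz u_h,\haz m_h)\in\mathcal{Q}_h$ for the stable states $(u_h(t),m_h(t))$, i.e. such that $\limsup_h\big(E_h(t,\haz u_h,\haz m_h)+D_h(m_h(t),\haz m_h)-E_h(t,u_h(t),m_h(t))\big)\le E_0(t,\haz u,\haz m)+D_0(m(t),\haz m)-E_0(t,u(t),m(t))$. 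Because the dissipation is $1$-homogeneous and separately convex in $m_p$ and $m_3$, and because the magnetic energy splits as exchange (quadratic) plus anisotropy (continuous) plus the reduced stray-field term $\tfrac12 m_3^2$, the same corrector-based construction as in Step 2 works: take $\haz m_h=\haz m$ (constant in $h$; note $\haz m_{,3}=0$ is automatic) and $\haz u_h=\haz u+h^2\phi_h e_3$ with $\phi_{h,3}=\psi_h\to b$ chosen as in \eqref{b}; then stability of $(u_h(t),m_h(t))$ tested against $(\haz u_h,\haz m_h)$ passes to the limit using the liminf inequality on the left and the limsup on the right. Step 4: the energy balance \eqref{E2} follows from the upper energy estimate obtained by integrating the power identity (here one needs $\partial_t E_h(s,\cdot)\to \partial_t E_0(s,\cdot)$ along the solutions, which holds because the power depends on the data $u^{\rm Dir},H$ only through continuous, affine-in-state expressions and $E_h(0,u^0,m^0)\to E_0(0,u^0,m^0)$ is assumed) together with the lower estimate from Step 2; as usual the two inequalities combine with the already-established stability to force equality and in fact also upgrade the weak convergence of the energies to convergence of each separate term.

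\textbf{Main obstacle.} I expect the delicate point to be the construction of the mutual recovery sequence in Step 3 that simultaneously (i) lands in $\mathcal{Q}_h$ — i.e. respects the $h$-dependent constraint $\varepsilon_{i3}(u_h)=\varepsilon^{\rm mag}(m_h)_{i3}$ — (ii) does not destroy the saturation constraint $|\haz m_h|=\msat$, and (iii) keeps the off-plane penalizations $h^{-2}\|m_{h,3}\|_{L^2}^2$ and $R_3(h)$ under control in the limit; reconciling the Kirchhoff--Love corrector $h^2\phi_h e_3$ for the elastic part with the requirement $\haz m_h=\haz m$ so that no spurious $\varepsilon^{\rm mag}$ mismatch of order $1/h$ appears is exactly where the structural $\mathit{Ansatz}$ \eqref{ansatz} and the hypothesis $u^{\rm Dir}\in\mathcal{K}$ are used, and one has to check carefully that the $1/h$-terms genuinely cancel as in the proof of Proposition \ref{Prop-recovery}. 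A secondary technical nuisance is the Helly selection in $BV([0,T];L^1)$ for $m_h$ with values in the non-convex manifold $\{|m|=\msat\}$ and the simultaneous extraction of a diagonal subsequence valid for all $t$; this is routine but must be done with the generalized Helly principle of \cite{mielketheil}.
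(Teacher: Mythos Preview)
Your proposal is correct and follows essentially the same route as the paper. The paper verifies the hypotheses of the abstract theorem \cite[Thm.~3.1]{mrs} by establishing the two $\Gamma\text{-}\liminf$ inequalities \eqref{gliminf}, the mutual recovery sequence Lemma~\ref{joint} (with exactly your choice $\haz m_h=\haz m$ and $\haz u_h$ the Kirchhoff--Love corrector from Proposition~\ref{Prop-recovery}), and the power convergence Lemma~\ref{powers}, then invokes the abstract result directly; you unpack the compactness/Helly step and the lower/upper energy estimate explicitly, but the substantive ingredients and constructions coincide. One minor remark: the constraint set $\mathcal{Q}$ in the paper does not actually depend on $h$ (the ``$\mathcal{Q}_h$'' in the theorem statement is $\mathcal{Q}$), so your worry about an $h$-dependent constraint in the mutual recovery construction is moot---the cancellation of the $1/h$ terms happens exactly as in Proposition~\ref{Prop-recovery} once $(\haz u,\haz m)\in\QQQ$ and $u^{\rm Dir}\in\KK$.
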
       

In order to show that  an energetic solution of the bulk material converges
to an energetic solution to a plate we  apply the abstract
strategy introduced  in
\cite{mrs}.  We shall not provide here a detailed proof, but
rather comment on two crucial points of the argument. The first of
these points concerns functional convergence. In particular, we shall
establish a specific {\it evolutive
  $\Gamma$-convergence} notion, adapted to rate-independent
evolutions. Indeed, the theory relies on the verification of
{\it two separate} $\Gliminf$ inequalities
\begin{equation}
  \label{gliminf}
   E_0 \leq \Gliminf_{h \to 0}  E_h, \qquad D_0 \leq \Gliminf_{h
    \to 0} D_h
\end{equation}
as well  as on a {\it mutual recovery sequence} condition. The
$\Gliminf$ inequality for $ E_h$ follows by easily adapting   the results of Section
\ref{dimension-reduction} to the present time-dependent case.  On
the other hand, the $\Gliminf$ inequality for $D_h$ is immediate as
$R_3(h) \to R_3(0_+) \geq 0$. 
The
following lemma  entails the existence of a  mutual recovery sequence.

\begin{lemma}[Mutual recovery sequence]\label{joint}
Let  $(t_h,u_h,m_h) \rightharpoonup (t,u,m)$ in $[0,T]\times
\mathcal{Q}$, and $(\haz u,\haz m)\in\QQQ$.  Then, there  exist $(\haz u_h,\haz m_h)
\rightharpoonup (\haz u,\haz m) $ such that 
\begin{align}
\limsup_{h\to 0} ( E_h(t_h, \haz u_h, \haz m_h) +D_h(m_h,\haz
m_h))\le  E_0(t,\haz u,\haz m)+D_0(m,\haz m).
\end{align} 
\end{lemma}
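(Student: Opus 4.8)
The plan is to construct the mutual recovery sequence explicitly, by "gluing" the static recovery-sequence constructions of Proposition~\ref{gamma1} and Proposition~\ref{Prop-recovery} onto the given competitor $(\haz u,\haz m)\in\QQQ$, while being careful that the extra dissipation term $D_h(m_h,\haz m_h)$ does not blow up. First I would treat the magnetic component. Since $(\haz u,\haz m)\in\QQQ$ we have $\haz m,_3=0$, so the constant-in-$h$ choice $\haz m_h:=\haz m$ is admissible in $\MM$, satisfies the saturation constraint, and by Lemma~\ref{gioia} (applied with $\widetilde m=\haz m$) gives $E^{\rm mag}_h(t_h,\haz m_h)\to E^{\rm mag}_0(t,\haz m)$; here the time-dependence enters only through the Zeeman term $-\haz H(t_h)\cdot\haz m$, which is continuous in $t$ by the assumed $C^1$ regularity of $H$ and the fact that $t_h\to t$. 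Moreover, with $\haz m_h=\haz m$ the extra dissipation is $D_h(m_h,\haz m)$, and since $m_h\rightharpoonup m$ weakly in $W^{1,2}$ hence strongly in $L^2$, and $R_3(h)\to R_3(0_+)$, we get $D_h(m_h,\haz m)\to D_0(m,\haz m)$ (using that $m$, being the weak limit of the $m_h$, automatically satisfies $m,_3=0$, which must be checked from the uniform energy bound implicit in the hypothesis that $(u_h,m_h)$ are bulk energetic solutions with $E_h(0,\cdot)\to E_0(0,\cdot)$ — this bound propagates in time via the energy balance).

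Next I would build the displacement component $\haz u_h$. This is exactly the construction in the proof of Proposition~\ref{Prop-recovery}: set $\haz u_{hi}:=\haz u_i$ for $i=1,2$ and $\haz u_{h3}:=\haz u_3+h^2\phi_h$, where $\phi_{h,3}=\psi_h$, $\psi_h\in C^\infty_0(\O)$, $\psi_h\to b$ in $L^2(\O)$ with $b$ given by \eqref{b} (now evaluated at the competitor $(\haz u,\haz m)$ and at time $t$), and $h\nabla\psi_h\to 0$ in $L^2$. Since $(\haz u,\haz m)\in\QQQ\subset\mathcal{Q}$ we have $\varepsilon(\haz u{+}u^{\rm Dir}(t))_{i3}=\varepsilon^{\rm mag}(\haz m)_{i3}$, so the $1/h$ terms in the expansion of $\varepsilon_h(\haz u_h{+}u^{\rm Dir}){-}\varepsilon^{\rm mag}_h(\haz m)$ cancel, and passing to the limit gives $E^{\rm elas}_h(t,\haz u_h,\haz m)\to E^{\rm elas}_p(t,\haz u,\haz m)$; the time-dependence of $u^{\rm Dir}$ is harmless as it enters continuously through $\varepsilon_p(u^{\rm Dir}(t))$. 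Note that $\haz u_h\rightharpoonup\haz u$ in $W^{1,2}$ because $h^2\phi_h\to 0$ appropriately; one must only verify $\haz u_h\in\UU$, i.e. the Dirichlet condition on $\Gamma$, which holds since $\psi_h$ has compact support so $h^2\phi_h$ can be arranged to vanish on $\Gamma$. Adding the two energy convergences and the dissipation convergence yields the claimed $\limsup$ inequality — in fact with equality and with $\limsup$ replaced by $\lim$.

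The main obstacle, and the point deserving the most care, is ensuring compatibility of the two constructions and, relatedly, that the weak limit $m$ of the $m_h$ genuinely satisfies $m,_3=0$ so that $D_0(m,\haz m)$ is well-defined and finite and so that $\haz m$ is indeed a legitimate competitor "at the limit point." This requires extracting, from the hypothesis that the $(u_h,m_h)$ are bulk energetic solutions together with $E_h(0,u^0,m^0)\to E_0(0,u^0,m^0)$, a uniform-in-$h$ bound on $\alpha h^{-2}\|m_{h},_3\|_{L^2}^2$; this follows from the energy balance \eqref{E} (the right-hand side is controlled by the $C^1$-in-time data and the initial energy), and forces $\|m_h,_3\|_{L^2}\to 0$, hence $m,_3=0$. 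A secondary technical point is that the competitor $\haz m=\haz m_h$ need not lie in $\mathcal{Q}_h$ on the nose — it does, since $\haz m,_3=0$ makes $E^{\rm mag}_h(\haz m)$ finite and $(\haz u_h,\haz m)$ satisfies the algebraic constraint $\varepsilon(\haz u_h)_{i3}=\varepsilon^{\rm mag}(\haz m)_{i3}$ up to the $h^2\phi_h$ perturbation, which affects only the $i3$ components at order $h$ and is exactly what the construction is designed to absorb in the limit. Everything else is a routine continuity/pointwise-convergence argument building on Lemmas and Propositions already established.
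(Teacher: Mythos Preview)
Your proposal is correct and follows exactly the same route as the paper: take $\haz m_h:=\haz m$ constantly, build $\haz u_h$ via the recovery construction of Proposition~\ref{Prop-recovery}, and conclude by the continuous convergence of $D_h$ to $D_0$ under the strong $L^2$-convergence $m_h\to m$ (from compact embedding) together with $R_3(h)\to R_3(0_+)$. The paper's proof is two sentences long and omits all the detail you supply; in particular it does not explicitly address your point about $m,_3=0$, which is a legitimate well-definedness issue for $D_0(m,\haz m)$ that the paper leaves implicit (it is in any case guaranteed in the applications, where the $(u_h,m_h)$ are stable with uniformly bounded energy).
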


\begin{proof}
 For all $h>0$, we  choose  $\haz
m_h:=\haz m$  and $\haz  u_h$  as in the proof of
Proposition~\ref{Prop-recovery}. The claim  then follows by the
 continuous convergence of $D_h$ to $D_0$ 
with respect to the strong $L^2$-convergence of its
arguments.
\end{proof}

 A second crucial point for the possible application of the
abstract argument of \cite{mrs} consists in the convergence proof of
the  power of the energy  functionals. We shall argue here
in the same spirit of \cite{freddi-paroni-zanini}. 

\begin{lemma}\label{powers}
Let $u^{\rm Dir}\in C^1([0,T];W^{1,2}(\O;\R^3))$ and $H\in
C^1([0,T];L^1(\R^3))$. 
Let $(t,u,m)\in(0,T)\times\QQ$ and  assume  that there is a
sequence $(t_h,u_h,m_h)\in (0,T)\times \QQ$ such that
$(u_h,m_h)\in\mathcal{S}_h(t_h)$ and $t_h\to t$, $u_h\rightharpoonup
u$ and $m_h\rightharpoonup m$ in $W^{1,2}(\O;\R^3)\times
W^{1,2}(\O;\R^3)$. Then,  we have the convergence of the energies
and the  corresponding  powers
\begin{align*}
  &E_h(t_h,u_h,m_h)\to  E_0(t,u,m),\\
&\partial_t E_h(t_h,u_h,m_h)\to\partial_t E_0(t,u,m).
\end{align*} 
\end{lemma}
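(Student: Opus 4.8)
The plan is to decompose the total energy $E_h = E_h^{\rm mag} + E_h^{\rm elas}$ and treat the two contributions separately, since only $E_h^{\rm elas}$ and the Zeeman term in $E_h^{\rm mag}$ carry an explicit time-dependence through $u^{\rm Dir}(t)$ and $H(t)$. First I would compute the power explicitly. For the Zeeman part, $\partial_t E^{\rm mag}_h(t,m) = -\frac1{|S|}\int_\O \dot H(t)\cdot m\,\md z$; since $\dot H \in C^0([0,T];L^1(\R^3))$ and $m_h$ is bounded in $L^\infty$ (by the saturation constraint $|m_h| = \msat$) and $m_h \to m$ strongly in $L^2$ (which follows from weak $W^{1,2}$-convergence by Rellich, together with uniform $L^\infty$ bounds), the convergence $\partial_t E^{\rm mag}_h(t_h,m_h) \to \partial_t E^{\rm mag}_0(t,m)$ is essentially a dominated-convergence argument, also using $\dot H(t_h) \to \dot H(t)$ in $L^1$. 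For the elastic part, differentiating the quadratic form gives $\partial_t E^{\rm elas}_h(t,u,m) = \frac1{|S|}\int_\O \mathbb{C}\big(\varepsilon_h(u{+}u^{\rm Dir}(t)) - \varepsilon_h^{\rm mag}(m)\big) : \varepsilon_h(\dot u^{\rm Dir}(t))\,\md z$.

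The key step, and the main obstacle, is controlling the scaled strains $\varepsilon_h(u_h + u^{\rm Dir}(t_h)) - \varepsilon_h^{\rm mag}(m_h)$, which contain the dangerous factors $1/h$ and $1/h^2$ in the off-plane entries. The uniform energy bound available at stable states (from $(u_h,m_h) \in \Stbl_h(t_h)$, tested against a fixed competitor) forces $E_h^{\rm elas}(t_h,u_h,m_h)$ to stay bounded, hence the full scaled matrix $\varepsilon_h(u_h{+}u^{\rm Dir}(t_h)) - \varepsilon_h^{\rm mag}(m_h)$ is bounded in $L^2(\O;\R^{3\times3})$; passing to a subsequence it converges weakly in $L^2$ to some limit $A$. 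Because $u^{\rm Dir} \in \KK$ and the limit pair $(u,m) \in \QQQ$ satisfies $\varepsilon(u)_{i3} = \varepsilon^{\rm mag}(m)_{i3}$, the off-plane entries of the limit of the unscaled numerators vanish, so the planar block of $A$ is exactly $\varepsilon_p(u{+}u^{\rm Dir}(t)) - \varepsilon_p^{\rm mag}(m)$, while the remaining entries are the weak limits of the $1/h$- and $1/h^2$-rescaled quantities. Here I would invoke the structure already established in the proof of Proposition~\ref{Prop-recovery}: the minimality in $\varepsilon_h$ (the displacement $u_h$ minimizes $E_h^{\rm elas}$ given $m_h$) pins down these extra components, so that $A = \big(\begin{smallmatrix} \varepsilon_p(u{+}u^{\rm Dir}) - \varepsilon_p^{\rm mag}(m) & a \\ a^\top & b\end{smallmatrix}\big)$ with $a,b$ given by the minimizing formulas $a=0$, $b = -\mathbb{C}_{ij33}(\cdot)_{ij}/\mathbb{C}_{3333}$.

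Finally, since $\dot u^{\rm Dir} \in C^0([0,T];W^{1,2})$ and $u^{\rm Dir}(t) \in \KK$ (so also $\dot u^{\rm Dir}(t) \in \KK$ by differentiating the constraint), the test strain $\varepsilon_h(\dot u^{\rm Dir}(t_h))$ has its $1/h$ and $1/h^2$ entries equal to zero, and it converges strongly in $L^2(\O;\R^{3\times3})$ to $\varepsilon(\dot u^{\rm Dir}(t))$ with only a planar block surviving; here continuity of $t\mapsto \dot u^{\rm Dir}(t)$ in $W^{1,2}$ and $t_h \to t$ give the strong convergence. Combining weak $L^2$-convergence of $\mathbb{C}\big(\varepsilon_h(u_h{+}u^{\rm Dir}(t_h))-\varepsilon_h^{\rm mag}(m_h)\big)$ with strong $L^2$-convergence of $\varepsilon_h(\dot u^{\rm Dir}(t_h))$ passes the limit through the pairing, and because the test strain has no off-plane components only the planar block of $A$ contributes; this yields precisely $\partial_t E^{\rm elas}_0(t,u,m) = \frac1{|S|}\int_\O \mathbb{C}^0\big(\varepsilon_p(u{+}u^{\rm Dir}(t)) - \varepsilon_p^{\rm mag}(m)\big):\varepsilon_p(\dot u^{\rm Dir}(t))\,\md z$, i.e. the power of $Q\big(\varepsilon_p(u{+}u^{\rm Dir}) - \varepsilon_p^{\rm mag}(m)\big)$. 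The convergence of the energies themselves, $E_h(t_h,u_h,m_h) \to E_0(t,u,m)$, then follows either by the same weak/strong pairing applied with a genuine recovery sequence (using Propositions~\ref{gamma1} and~\ref{Prop-recovery} together with the $\limsup$-stability typical of the evolutive $\Gamma$-convergence machinery of \cite{mrs}) or, more directly, by noting that the $\Gliminf$ inequalities \eqref{gliminf} give one direction while the existence of a recovery sequence combined with the stability of $(u_h,m_h)$ gives the other.
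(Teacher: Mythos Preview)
Your proof is correct and follows the same strategy as the paper: the energy convergence via the $\Gamma$-liminf inequalities combined with stability and the mutual recovery sequence of Lemma~\ref{joint} is exactly the paper's argument, and your treatment of the power convergence---splitting into the Zeeman term (handled by compactness and linearity) and the elastic term (handled by identifying the weak $L^2$ limit of the scaled strain difference via the Euler--Lagrange equation coming from minimality in $u$)---is precisely the computation the paper defers to \cite{freddi-paroni-zanini}. One small remark: the phrase ``only the planar block of $A$ contributes'' is slightly imprecise, since the $(3,3)$ entry $b$ does enter the pairing through $\mathbb{C}$; it is your identification $b=-\mathbb{C}_{ij33}(\cdot)_{ij}/\mathbb{C}_{3333}$ that makes the contraction against the planar test strain collapse to $\mathbb{C}^0$, so the conclusion stands.
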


\begin{proof}
 Let us first show the convergence of the energies. The inequality $$\liminf_{h\to
  0} E_h(t_h,u_h,m_h)\ge  E_0(t,u,m)$$  follows from the 
already checked $\Gamma$-convergence of the functionals,
i.e. Propositions~\ref{gamma1} and \ref{Prop-recovery}. To check for  the other inequality we use Lemma~\ref{joint} and stability. 
Indeed,  by  choosing   $\haz u:=u$, $\haz m:=m$ and 
letting $(\haz u_h, \haz m_h)$ be the corresponding sequences from Lemma~\ref{joint}  we have 
\begin{align*}
  \limsup_{h\to 0} E_h(t_h,u_h,m_h)&\le \limsup_{h\to 0}( E_h(t_h, \haz u_h,
  \haz m_h) +D_h(m_h,\haz m_h))  \\
&\le E_0(t,\haz u,\haz m)  + D_0(m,m)=  E_0(t, u, m),
\end{align*}
where the first inequality follows from the stability  $(u_h,m_h)
\in \Stbl_h(t_h)$. 

 Let us now compute the power of $ E_h$ as  
\begin{align*}
\partial_t
E_h(t_h,u_h,m_h)&=\int_{\Omega_h}\mathbb{C}(\varepsilon(u_h){+}\varepsilon(u^{\rm
  Dir}(t_h)){-}\varepsilon^{\rm mag}(m_h)):\varepsilon(\dot u^{\rm
  Dir}(t_h))\, \md x\\
&-\int_{\Omega_h} \dot H(t_h)\cdot m_h\,\md x.
\end{align*}
An analogous expression holds for $\partial_t E_0(t,u,m)$. The
convergence of the first term in the expression of $\partial_t
E_h(t_h,u_h,m_h)$ can be proved as in \cite{freddi-paroni-zanini}
while the convergence of the second term 
is immediate by linearity.
\end{proof}

 Given the $\Gamma$-convergence of the functionals (Section~\ref{dimension-reduction}) and the powers (Lemma~\ref{powers}) and the
existence of a mutual recovery sequence (Lemma \ref{joint}), it
suffices to remark that $ E_h$ is coercive with respect to the 
weak topology of $W^{1,2}(\O;\R^3)\times W^{1,2}(\O;\R^3)$  in
order to obtain Theorem \ref{thm} by applying the abstract theorem  \cite[Thm~3.1]{mrs}.

Before closing this discussion let us explicitly note that the
developed technologies would allow also to deduce additional
dimension reduction results. In particular, by neglecting mechanical
effects, one could consider the possibility of deducing a
rate-independent model for the  quasistatic  evolution of a thin-film
driven by micromagnetic energy. This would constitute an evolutive
counterpart to the static analysis in \cite{goia-james}.

\section*{Acknowledgments} This work was initiated during a visit of
MK in IMATI CNR Pavia. Its hospitality and support is gratefully
acknowledged. US is partially supported by the CNR-JSPS grant {\it VarEvol}.

\medskip
Received xxxx 20xx; revised xxxx 20xx.
\medskip

\end{document}